\begin{document}

\newtheorem{theorem}{Theorem}[section]
\newtheorem{proposition}[theorem]{Proposition}
\newtheorem{lemma}[theorem]{Lemma}
\newtheorem{corollary}[theorem]{Corollary}
\newtheorem{conjecture}[theorem]{Conjecture}
\newtheorem{question}[theorem]{Question}
\newtheorem{problem}[theorem]{Problem}
\theoremstyle{definition}
\newtheorem{definition}{Definition}

\theoremstyle{remark}
\newtheorem{remark}[theorem]{Remark}

\def\theenumi{\roman{enumi}}

\numberwithin{equation}{section}

\renewcommand{\Re}{\operatorname{Re}}
\renewcommand{\Im}{\operatorname{Im}}

\def \G {{\Gamma}}
\def \g {{\gamma}}
\def \R {{\mathbb R}}
\def \HH {{\mathbb H}}
\def \C {{\mathbb C}}
\def \Z {{\mathbb Z}}
\def \Q {{\mathbb Q}}
\def \TT {{\mathbb T}}
\newcommand{\T}{\mathbb T}
\def \vol {\hbox{vol}}

\newcommand{\tr}[1] {\hbox{tr}\left( #1\right)}

\newcommand{\area}{\operatorname{area}}

\newcommand{\Op}{\operatorname{Op}}
\newcommand{\dom}{\operatorname{Dom}}
\newcommand{\Dom}{\operatorname{Dom}}

\newcommand{\Norm}{\mathcal N}
\newcommand{\simgeq}{\gtrsim}%
\newcommand{\simleq}{\lesssim}

\newcommand{\UN}{U_N}
\newcommand{\OPN}{\operatorname{Op}_N}
\newcommand{\HN}{\mathcal H_N}
\newcommand{\TN}{T_N}  
\newcommand{\PDO}{\Psi\mbox{DO}}

\newcommand{\oi}{\mathcal I}

\newcommand{\length}{\operatorname{length}}

\newcommand{\curve}{\mathcal C} 
\newcommand{\vE}{\mathcal E} 

\newcommand{\dist}{\operatorname{dist}}
\newcommand{\supp}{\operatorname{supp}}
\newcommand{\spec}{\operatorname{spec}}
\newcommand{\diam}{\operatorname{diam}}

\newcommand{\Ccap}{\operatorname{Cap}}

\newcommand{\paramone}{C_1}
\newcommand{\prho}{\sigma}
\newcommand{\tg}{g}
\newcommand{\sumstar}{\sideset{}{^\ast}\sum}

\title{Nodal intersections and $L^p$ restriction theorems on the torus}
\author{ Jean Bourgain and Ze\'ev Rudnick}
\address{School of Mathematics, Institute for Advanced Study,
Princeton, NJ 08540 } \email{bourgain@ias.edu}

\address{Raymond and Beverly Sackler School of Mathematical Sciences,
Tel Aviv University, Tel Aviv 69978, Israel}
\email{rudnick@post.tau.ac.il}

\thanks{J.B. thanks the UC Berkeley
math department for their hospitality. J.B. was partially funded by
NSF grants DMS-0808042 and DMS-0835373. The research leading to
these results has received funding from the European Research
Council under the European Union's Seventh Framework Programme
(FP7/2007-2013) / ERC grant agreement n$^{\text{o}}$ 320755. }

\date{January 28, 2014}
\maketitle

\section{Introduction}

\subsection{Nodal intersections}
 Let $\curve\subset \TT^2$ be a curve on the standard torus
$\TT^2=\R^2/2\pi\Z^2$ , which has nowhere-zero curvature. Let $F$ be
a real-valued eigenfunction of the Laplacian on $\TT^2$ with
eigenvalue $\lambda^2$: $-\Delta F =\lambda^2 F$. We want to
estimate the number of nodal intersections
\begin{equation}
N_{F,\curve}=\#\{x: F(x)=0\} \cap \curve \end{equation}
 that is the number of zeros of $F$ on $\curve$ .


If $\curve$ is real analytic, then upper bounds of the form
$N_{F,\curve}\ll \lambda$ can be obtained from a result of Toth and
Zelditch \cite{TZ} (see also \cite{BRGAFA}, \cite{ET}) once we have
an exponential restriction lower bound $\int_{\curve}|F|^2\gg e^{-c
{\lambda }} ||F||_2^2$ for the $L^2$-norm of $F$ restricted to
$\curve$, in terms of the $L^2$-norm $||F||_2^2 =
\int_{\TT^2}|F(x)|^2dx$.
In the case of the torus, for any smooth $\curve$ with non-vanishing
curvature we have earlier obtained a {\em uniform} $L^2$-restriction
bound \cite{BRCRAS}
\begin{equation}\label{lower bound BR}
   \int_{\curve} |F|^2
\gg ||F||_2^2
\end{equation}
(the implied constants depending only on the curve $\curve$)  and
hence by \cite{TZ} we get an upper bound for $\curve$ analytic
\begin{equation}\label{upper bd on N}
N_{F,\curve} \ll \lambda
\end{equation}
In our paper \cite{BRGAFA} we also obtained a lower bound for
$N_{F,\curve}$ when the curve $\curve$ has non-vanishing curvature:
\begin{equation}
N_{F,\curve} \gg \lambda^{1-o(1)}
\end{equation}
We conjecture that the correct lower bound is
\begin{equation}\label{conj N}
N_{F,\curve}\gg \lambda
\end{equation}
 that is the lower bound should be the same order of magnitude as the upper bound.

In this paper we approach conjecture \eqref{conj N} by giving a
lower bound for $N_{F,\curve}$ in terms of an arithmetic quantity,
the maximal number $B_\lambda$ of lattice points which lie on an arc
of size $\sqrt{\lambda}$ on the circle $|x|=\lambda$:
\begin{equation}\label{def of B}
B_\lambda=\max_{|x|=\lambda} \#\{\xi\in \vE: |x-\xi|\leq
\sqrt{\lambda}\}
\end{equation}
where  $\vE = \vE_\lambda$ is the set of all lattice points on the
circle $|x|=\lambda$.

\begin{theorem}\label{thm lower bd for N}
If $\curve$ is smooth with non-zero curvature then
 \begin{equation}
 N_{F,\curve} \gg \lambda /B_\lambda^{5/2}
 \end{equation}
 \end{theorem}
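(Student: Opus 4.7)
The plan is to derive matching upper and lower bounds for the restricted $L^4$ norm $\int_\curve |F|^4$ and compare them. The upper bound will be an $L^4$ restriction theorem in which the clustering parameter $B_\lambda$ appears naturally; the lower bound uses the sparseness of zeros of $F$ on $\curve$ combined with the $L^2$ lower bound \eqref{lower bound BR}.

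\textbf{Step 1 (upper bound).} Write $F(x)=\sum_{\xi\in\vE}a_\xi e^{i\langle x,\xi\rangle}$ with $\sum|a_\xi|^2=\|F\|_2^2$, and open the fourth power:
$$\int_\curve|F|^4 = \sum_{\xi_1,\xi_2,\xi_3,\xi_4\in\vE} a_{\xi_1}a_{\xi_2}\bar a_{\xi_3}\bar a_{\xi_4}\, J(\xi_1+\xi_2-\xi_3-\xi_4),$$
where $J(\eta)=\int_\curve e^{i\langle x,\eta\rangle}\,d\sigma(x)$. Because $\curve$ has non-vanishing curvature, van der Corput gives $|J(\eta)|\ll\min(1,|\eta|^{-1/2})$. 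After a dyadic decomposition in $|\eta|$, the estimate reduces to counting quadruples $(\xi_1,\xi_2,\xi_3,\xi_4)\in\vE^4$ with $\xi_1+\xi_2-\xi_3-\xi_4=\eta$ for $\eta$ in a given annulus; the clustering of lattice points of $\vE$ in arcs of length $\sqrt\lambda$, measured by $B_\lambda$, controls the density of such quadruples and should yield
$$\int_\curve|F|^4 \ll B_\lambda^{5/2}\,\|F\|_2^4.$$

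\textbf{Step 2 (lower bound).} Let $I_1,\ldots,I_{N+1}$ be the subarcs of $\curve$ cut out by the $N=N_{F,\curve}$ nodal points. On each $I_j$ the function $F$ has constant sign and vanishes at the endpoints, so Cauchy-Schwarz yields $\int_{I_j}F^4\geq |I_j|^{-1}(\int_{I_j}F^2)^2$. A Bernstein/Markov inequality for $F|_\curve$, whose effective bandwidth is $O(\lambda)$, will show that the arcs supporting the bulk of the $L^2$ mass have length $\lesssim 1/\lambda$, so on those arcs $\int_{I_j}F^4\gg\lambda(\int_{I_j}F^2)^2$. Summing and applying Cauchy-Schwarz across the $N+1$ arcs then produces
$$\int_\curve F^4 \gg \frac{\lambda}{N}\Big(\int_\curve F^2\Big)^2.$$

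\textbf{Step 3 (comparison and main obstacle).} Feeding the $L^2$ lower bound $\int_\curve F^2\gg\|F\|_2^2$ of \eqref{lower bound BR} into Step 2 gives $\int_\curve F^4\gg(\lambda/N)\|F\|_2^4$, and comparing with Step 1 forces $\lambda/N\ll B_\lambda^{5/2}$, i.e.\ $N\gg\lambda/B_\lambda^{5/2}$. The principal obstacle will be Step 1, specifically the arithmetic estimate on quadruples of lattice points on the circle $|x|=\lambda$ that produces the correct power $B_\lambda^{5/2}$; Step 2 is of a local, analytic nature, but one must take care with arcs on which $F$ is non-zero yet carries little $L^2$ mass, as on such arcs the Cauchy-Schwarz factor $|I_j|^{-1}$ is weak and their contribution must be shown to be absorbable into the main term without spoiling the $\lambda/N$ factor.
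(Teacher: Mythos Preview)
Your Step~2 contains a genuine gap that I do not see how to repair. From Cauchy--Schwarz you correctly get $\int_{I_j}F^4\ge|I_j|^{-1}\bigl(\int_{I_j}F^2\bigr)^2$, but summing over $j$ and applying Cauchy--Schwarz across the arcs only yields the trivial bound $\int_\curve F^4\ge L^{-1}\bigl(\int_\curve F^2\bigr)^2$, with no factor of $\lambda/N$. To promote $L^{-1}$ to $\lambda/N$ you would need to know that the bulk of the $L^2$-mass lives on arcs of length $\lesssim 1/\lambda$; a Bernstein inequality bounds $|f'|$ by $O(\lambda)\|f\|_\infty$, which controls how fast $f$ can grow away from a zero, but says nothing about the behaviour of $f$ in the interior of a long arc. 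Nothing you have written rules out a restriction $f=F|_\curve$ that is bounded away from $0$ on an arc of macroscopic length, and in that regime $\int f^4\asymp(\int f^2)^2$, so your claimed inequality fails outright. The problematic arcs are not the ones ``carrying little $L^2$ mass'' that you flag in Step~3, but rather the long arcs carrying a lot of mass in a spread-out way.

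There is also a mismatch in the exponents which signals that the route is off. The paper actually proves $\int_\curve|F|^4\ll B_\lambda\|F\|_2^4$ (Theorem~\ref{thm L4 int}), not $B_\lambda^{5/2}$; if your Step~2 were valid, combining it with this sharper bound would give $N\gg\lambda/B_\lambda$, strictly stronger than the theorem. The exponent $5/2$ in the paper does not come from the $L^4$ estimate. Instead the paper first proves $N\gg\lambda\bigl(\|F\|_2^{-1}\int_\curve|F|\bigr)^5$ (Theorem~\ref{thm N in terms of m int}) by a sign-change argument: one compares $\int|f|\tau_j$ with $|\int f\tau_j|$ on a partition of unity at scale $\paramone/\lambda$, splits $f=f_0+f_1$ according to whether the phase is nearly stationary, and optimises in $\paramone$. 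Then $\int_\curve|F|$ is bounded below via the interpolation $\|F\|_{L^2(\curve)}\le\|F\|_{L^1(\curve)}^{1/3}\|F\|_{L^4(\curve)}^{2/3}$ together with the $L^2$ restriction lower bound and the $L^4$ upper bound $\|F\|_{L^4(\curve)}\ll B_\lambda^{1/4}\|F\|_2$, yielding $\|F\|_2^{-1}\int_\curve|F|\gg B_\lambda^{-1/2}$. The exponent is $5\times\tfrac12=\tfrac52$.
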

 According to the conjecture of Cilleruelo and Granville \cite{CG},
$B_\lambda=O(1)$ is bounded, which in view of Theorem~\ref{thm lower
bd for N}, implies conjecture \eqref{conj N}.

The conjecture of Cilleruelo and Granville  is known for "almost
all" $\lambda$ \cite{BRAHP}, but individually we only know a bound
of $B_\lambda\ll \log \lambda$, see \S~\ref{sec:lattice pts} .

To contrast with these results, we show in \S~\ref{Sec geod}  that
no lower bounds for $N_{F,\curve}$ are possible when the curvature
is zero, that is for geodesic segments, in fact that
$\liminf_\lambda N_{F,\curve}=0$. We also briefly discuss the
situation on the sphere.

\subsection{Relation with $L^p$ restriction theorems}
To prove Theorem~\ref{thm lower bd for N} we start by giving a lower
bound for $N_{F,\curve}(\lambda)$ in terms of a lower bound for the
restriction $L^1$-norm:
In \S~\ref{sec: of of thm on N} we show
\begin{theorem}\label{thm N in terms of m int}
If $\curve$ is smooth with non-zero curvature then
\begin{equation}
 N_{F,\curve} \gg  \lambda \cdot  \left( \frac 1{||F||_2} \int_\curve |F|\right)^5
\end{equation}
\end{theorem}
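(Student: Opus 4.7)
The plan is to prove the equivalent upper bound
\[
\int_\curve |F| \ll (N_{F,\curve}/\lambda)^{1/5}\,\|F\|_2
\]
via a sign-function decomposition combined with Cauchy-Schwarz on the Fourier dual of $\curve$. Parametrize $\curve$ by arc length, $\gamma:[0,L]\to\curve$, and set $f(t)=F(\gamma(t))$, which has $N:=N_{F,\curve}$ zeros on $[0,L]$ (generically simple). The sign function $\psi(t):=\operatorname{sgn} f(t)$ is a $\pm 1$-valued step function with at most $N+1$ intervals of constancy and total variation at most $2(N+1)$, and $\int_\curve|F|=\int_0^L \psi(t) f(t)\,dt$.

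Expanding $F(x)=\sum_{\xi\in\vE} c_\xi\,e^{i\xi\cdot x}$ with $\sum_{\xi}|c_\xi|^2\asymp\|F\|_2^2$, the pairing becomes $\int_\curve|F|=\sum_{\xi\in\vE} c_\xi\,\hat\psi(\xi)$, where $\hat\psi(\xi):=\int_0^L \psi(t)\,e^{i\xi\cdot\gamma(t)}\,dt$. Cauchy-Schwarz in $\xi$ yields
\[
\left(\int_\curve|F|\right)^2 \ll \|F\|_2^2\cdot T,\qquad T:=\sum_{\xi\in\vE}|\hat\psi(\xi)|^2,
\]
so it suffices to establish the key inequality $T\ll (N/\lambda)^{2/5}$. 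To estimate $T$, I write $\hat\psi(\xi)$ as a signed sum over the $N+1$ constancy intervals $[z_i,z_{i+1}]$ and bound each oscillatory integral $\int_{z_i}^{z_{i+1}} e^{i\xi\cdot\gamma(t)}\,dt$ by stationary phase (using the nonvanishing curvature), obtaining the pointwise estimate $|\hat\psi(\xi)|\ll N/\sqrt\lambda$. This is wasteful for large $N$; to sharpen it I partition $\vE$ by the angular distance $\theta(\xi)$ of $\xi/\lambda$ to the normal directions of $\curve$. In the ``transverse'' regime, Van der Corput applied to the BV function $\psi$ gives $|\hat\psi(\xi)|\ll N/(\lambda\,\theta(\xi))$; in the ``nearly normal'' regime, the stationary-phase estimate prevails. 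A careful balancing of these contributions, together with control on the number of $\xi\in\vE$ within a given angular neighbourhood of the normal arc of $\curve$, is meant to yield the desired bound on $T$.

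The principal obstacle is the sharp estimate $T\ll (N/\lambda)^{2/5}$: the exponent $2/5$ is dictated by an optimization over the threshold separating the transverse and nearly-normal regimes, and obtaining it requires delicate interplay between stationary-phase/Van der Corput bounds, the lattice-point geometry on the circle $|\xi|=\lambda$, and the $L^p$-restriction machinery for torus eigenfunctions referenced in the paper's title. Once the bound on $T$ is in hand, substitution into the Cauchy-Schwarz inequality above closes the argument.
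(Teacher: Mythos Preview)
Your reduction via Cauchy--Schwarz to the inequality
\[
T \;=\; \sum_{\xi\in\vE}|\hat\psi(\xi)|^2 \;\ll\; (N/\lambda)^{2/5}
\]
is not a genuine reduction: it is essentially a restatement of the theorem. Indeed, for $\psi=\operatorname{sgn}(g)$ with $g=G|_\curve$ any real eigenfunction, the Cauchy--Schwarz you apply is an equality precisely when $c_\xi\propto\hat\psi(\xi)$, so the bound on $T$ is equivalent (up to the $L^2$ restriction theorem, which says the $e^{i\xi\cdot\gamma}$ form a Riesz system) to $\int_\curve|G|\ll(N_G/\lambda)^{1/5}\|G\|_2$ for all such $G$. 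You have simply reformulated the target.

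Worse, the tools you propose cannot prove the reformulation. Your analysis of $\hat\psi(\xi)$ uses only that $\psi$ is a $\pm 1$ step function with $N$ jumps, nothing about $\psi=\operatorname{sgn}f$. The pointwise estimates you state, $|\hat\psi(\xi)|\ll N/\sqrt\lambda$ (stationary phase on each piece) and $|\hat\psi(\xi)|\ll N/(\lambda\theta(\xi))$ (transverse case), together with the trivial $|\hat\psi(\xi)|\le L$, give after summing over $\xi\in\vE$ only
\[
T \;\ll\; \lambda^{o(1)}\min\bigl(1,\;N^2/\lambda\bigr),
\]
since $\#\vE\ll\lambda^{o(1)}$. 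This matches $(N/\lambda)^{2/5}$ only when $N\lesssim\lambda^{3/8}$ or $N\gtrsim\lambda$; throughout the critical range $\lambda^{3/8}\lesssim N\lesssim\lambda$ it is too weak by a positive power of $\lambda$. No balancing of a threshold $\varepsilon$ between the two regimes recovers the exponent $2/5$: the loss is already visible for a single $\xi$ normal to $\curve$, where your bounds give $|\hat\psi(\xi)|^2\ll\min(1,N^2/\lambda)$ and nothing better. The vague appeal to ``lattice-point geometry'' and ``$L^p$-restriction machinery'' does not fill this gap.

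For comparison, the paper does not pass through Fourier space at all at this step. It takes a partition of unity $\{\tau_j\}$ on $[0,L]$ at scale $C_1/\lambda$, lets $J_0$ be the set of $j$ for which $f$ changes sign on $\operatorname{supp}\tau_j$ (so $\#J_0\le N$), and writes $\int|f|=\sum_{j\in J_0}\int|f|\tau_j+\sum_{j\notin J_0}|\int f\tau_j|$. The $J_0$-sum is $\ll(\#J_0\,C_1/\lambda)^{1/2}$ by Cauchy--Schwarz and the $L^2$ restriction upper bound. The complementary sum is handled by decomposing $f=f_0+f_1$ according to whether the phase $\phi_\mu'$ is small or not, bounding $\|f_0\|_2\ll\sigma^{1/2}$ via a bilinear inequality on the curve, and integrating $f_1$ by parts against $\tau_j$; this yields $\sum_{j\notin J_0}|\int f\tau_j|\ll C_1^{-1/3}$. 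Optimizing $C_1$ gives the exponent $5$. The crucial ingredient you are missing is this local-in-$t$ decomposition together with a bilinear $L^2$ estimate that exploits the spacing of lattice points (Jarnik's theorem) rather than pointwise control of individual oscillatory integrals.
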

We conjecture a uniform lower bound for the restriction $L^1$-norm,
which will imply \eqref{conj N}.

%
Next, in \S~\ref{sec:reduction to L4} we give a lower bound for
$||F||_{L^1(\curve)}=\int_\curve|F|$ in terms of the restriction
$L^4$ norm:
\begin{equation}\label{L1 in terms of L4 v2 int}
||F||_{L^1(\curve)}\gg_\curve \frac {||F||_2^3}
{||F||^{2}_{L^4(\curve)}}
\end{equation}
Thus we find that we are reduced to giving an {\em upper} bound on
the restriction $L^4$-norm.  In \S~\ref{sec: the L4 norm} we show
\begin{theorem}\label{thm L4 int}
If $\curve$ is smooth with non-zero curvature then
\begin{equation}
||F||_{L^4(\curve)}  
\ll B_\lambda^{1/4} ||F||_2
\end{equation}
\end{theorem}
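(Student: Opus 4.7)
The plan is to use a frequency-arc decomposition combined with the $L^2$-restriction bound \eqref{lower bound BR}. Partition the set $\vE$ of lattice points on the circle $|\xi|=\lambda$ into arcs $\Gamma_j$ of arclength $\sqrt{\lambda}$; by the definition of $B_\lambda$ each arc contains at most $B_\lambda$ lattice points, and there are $\asymp\sqrt{\lambda}$ such arcs. Write $F=\sum_j F_j$ with $F_j=\sum_{\xi\in\Gamma_j\cap\vE}a_\xi e^{i\xi\cdot x}$, so that each $F_j$ is itself a $\lambda^2$-eigenfunction and the $F_j$'s are pairwise $L^2(\TT^2)$-orthogonal. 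Since $F$ is real, $\|F\|_{L^4(\curve)}^4=\|F^2\|_{L^2(\curve)}^2$ with $F^2=\sum_{j,k}F_j F_k$.

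The first step is to establish an almost-orthogonality bound
\[
\Big\|\sum_{j,k}F_jF_k\Big\|_{L^2(\curve)}^2 \;\ll\; \sum_{j,k}\|F_jF_k\|_{L^2(\curve)}^2.
\]
The product $F_jF_k$ has Fourier support in $\Gamma_j+\Gamma_k$, a region centered in $\R^2$ near $\lambda(e^{i\theta_j}+e^{i\theta_k})$. A direct computation shows that as $\{j,k\}$ varies over unordered pairs these centers form a grid of spacing $\gtrsim\sqrt{\lambda}$, comparable to the size of the regions themselves. Combined with the stationary-phase decay $|\int_\curve e^{i\mu\cdot x}ds|\ll (1+|\mu|)^{-1/2}$ guaranteed by the non-vanishing curvature of $\curve$, this controls the cross-terms $\langle F_jF_k, F_{j'}F_{k'}\rangle_{L^2(\curve)}$ by a Schur-type bound with an absolute constant.

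The second step bounds each diagonal term. Since $F_k$ is a $\lambda^2$-eigenfunction, the upper-bound version of the $L^2$-restriction estimate from \cite{BRCRAS} gives $\|F_k\|_{L^2(\curve)}\ll \|F_k\|_2$; since $F_j$ has at most $B_\lambda$ non-zero Fourier coefficients, Cauchy--Schwarz gives $\|F_j\|_{L^\infty(\TT^2)}\le B_\lambda^{1/2}\|F_j\|_2$. Hence
\[
\|F_jF_k\|_{L^2(\curve)}^2 \le \|F_j\|_{L^\infty(\TT^2)}^2\,\|F_k\|_{L^2(\curve)}^2 \ll B_\lambda\, \|F_j\|_2^2\,\|F_k\|_2^2.
\]
Summing over $(j,k)$ and using the $L^2$-orthogonality $\sum_j \|F_j\|_2^2 = \|F\|_2^2$ yields $\sum_{j,k}\|F_jF_k\|_{L^2(\curve)}^2 \ll B_\lambda \|F\|_2^4$, which combined with the almost-orthogonality delivers $\|F\|_{L^4(\curve)}^4 \ll B_\lambda \|F\|_2^4$ as required.

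The principal obstacle is the almost-orthogonality step: although the sum regions $\Gamma_j+\Gamma_k$ are well-separated in $\R^2$, the inner products are on the $1$-dimensional curve $\curve$ rather than on $\TT^2$, so Plancherel does not apply directly and one must quantitatively exploit the decay of the oscillatory integrals $\int_\curve e^{i\mu\cdot x}ds$ across the separation between distinct sum regions to sum the cross-terms uniformly.
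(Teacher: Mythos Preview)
Your overall scheme---bilinear expansion of $F^2$, stationary-phase decay along $\curve$, and a Schur-type bound---matches the paper's. The disagreement is about where $B_\lambda$ enters: you put it entirely into the diagonal bound $\|F_j\|_\infty\le B_\lambda^{1/2}\|F_j\|_2$ and claim the almost-orthogonality of the pieces $F_jF_k$ on $\curve$ holds with an \emph{absolute} constant coming from geometry alone. That claim is the gap.

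The stated justification fails on two counts. First, the centers $c_{j,k}=\lambda(e^{i\theta_j}+e^{i\theta_k})$ of the sum regions $\Gamma_j+\Gamma_k$ do \emph{not} form a uniform grid of spacing $\gtrsim\sqrt\lambda$: under $(j,k)\mapsto(j+1,k-1)$ the center moves by $\approx i\sqrt\lambda\,(e^{i\theta_j}-e^{i\theta_k})$, which has magnitude $\approx|j-k|$ when $j$ and $k$ are nearby, hence $O(1)$ near the diagonal. This near-diagonal regime (in the paper's language, $\Delta(z)\approx\sqrt\lambda$) is exactly what forces the finer subdivision $S_{L,l}$ and the case analysis of \S\ref{sec: the L4 norm}. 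Second, even where the spacing is genuinely $\approx\sqrt\lambda$, the Schur sum of $|p|^{-1/2}$ over a two-dimensional array is not $O(1)$: a half-power of decay is not summable in two variables. The paper circumvents both issues by working directly with the medians $z=\tfrac12(\mu+\nu)$, using $\#\mathcal Z\ll\lambda^{o(1)}$ to discard pairs with $|z-w|\ge\lambda^\epsilon$, and then invoking the arithmetic input (Jarnik's theorem together with the definition of $B_\lambda$, via Lemma~\ref{stability lemma} and Lemmas~\ref{lem number of w}--\ref{bound z-w}) to count the nearby medians. The factor $B_\lambda$ arises \emph{inside} this bilinear estimate (Proposition~\ref{prop bilinear schur}), not in a separate diagonal step; your attempt to obtain the off-diagonal control for free from curvature, without the lattice-point counting, does not go through.
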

Inserting Theorem~\ref{thm L4 int} into  \eqref{L1 in terms of L4 v2
int} we obtain
\begin{equation}
\frac 1{||F||_2}\int_\curve |F| \gg \frac{1}{\sqrt{ B_\lambda}}
\end{equation}
and using Theorem~\ref{thm N in terms of m int} we obtain
Theorem~\ref{thm lower bd for N}.

\subsection{Prior results}
There are very few  lower bounds on the number of nodal
intersections available for other models.  In the case of the
modular domain $\HH^2/SL_2(\Z)$ and $\curve$ being a closed
horocycle, Ghosh, Reznikov and Sarnak \cite{GRS} give a lower bound
$N_{F,\curve}\gg \lambda^{1/12-o(1)}$ for eigenfunctions $F$ which
are joint eigenfunctions of all Hecke operators, and assuming the
Generalized Riemann Hypothesis  they give a similar result when
$\curve$ is a sufficiently long segment of the infinite geodesic
running between two cusps,

Concerning upper bounds,   El-Hajj and Toth  \cite{ET} show that for
a bounded, piecewise-analytic convex domain with ergodic billiard
flow and $\curve$   an analytic interior curve with strictly
positive geodesic curvature, the upper bound \eqref{upper bd on N}
holds for a density-one subsequence of eigenfunctions. For
eigenfunctions on a compact hyperbolic surface, Jung \cite{Jung} has
recently obtained an upper bound analogous to \eqref{upper bd on N}
when $\curve$ is a geodesic circle.


\section{Lattice points and geometry}\label{sec:lattice pts}

\subsection{Lattice points in short arcs}

We denote by $\vE = \vE_\lambda$ the set of lattice points on the
circle $|x|=\lambda$. As is well known, $\#\vE \ll \lambda^{o(1)}$
and can grow faster than any power of $\log \lambda$. Concerning
lattice points in short arcs, Jarnik \cite{Jarnik} showed that any
arc of length $\lambda^{1/3}$ contains at most two lattice points.
Cilleruelo and C\'ordoba \cite{CC} showed that for fixed $\delta>0$,
any arc of length $\lambda^{1/2-\delta}$ contains at most
$M(\delta)$ lattice points. The natural conjecture here \cite{CG} is
that the same statement holds for arcs of length
$\lambda^{1-\delta}$. However this is still open even for arcs of
size $\sqrt{\lambda}$. That turns out to be a critical regime for
us, and we set
\begin{equation}
B_\lambda = \max_{|x|=\lambda} \#\{\mu \in \vE:
|\mu-x|<\sqrt{\lambda} \}
\end{equation}
to be the maximal number of lattice points in arcs of size
$\sqrt{\lambda}$.

\begin{lemma}
Let $B=B_\lambda(c)$ be the maximal number of lattice points of
$\vE$ in an arc of length $c\sqrt{\lambda}$, $c\geq 1/2$. Then
\begin{equation}
B\ll  c\log \lambda
\end{equation}
\end{lemma}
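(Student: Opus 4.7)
My plan has two main parts: a partitioning reduction to bounded $c$, followed by an arithmetic count using the factorization of $n=\lambda^2$ in $\Z[i]$.

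\emph{Reduction.} I would first partition the arc of length $c\sqrt\lambda$ into at most $\lceil 2c\rceil$ disjoint sub-arcs, each of length $\leq \sqrt\lambda/2$. Since each sub-arc contains at most $B_\lambda(1/2)$ lattice points, this gives $B_\lambda(c)\leq \lceil 2c\rceil\cdot B_\lambda(1/2)$, and it suffices to prove $B_\lambda(1/2)\ll\log\lambda$.

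\emph{Gaussian integer reformulation.} Identify $\R^2\cong\C$ and view each lattice point $\xi\in\vE$ as a Gaussian integer of norm $n=\lambda^2$. Fix $\xi_0\in\vE$ in the arc; for any other lattice point $\xi$ in the arc, the product $\omega:=\xi\bar\xi_0\in\Z[i]$ has $|\omega|=n$ and $|\arg\omega|\leq 1/(2\sqrt\lambda)$. Writing $n=2^a\prod_{p\equiv 1(4)}p^{e_p}\prod_{q\equiv 3(4)}q^{2f_q}$ and each split prime as $p=\pi_p\bar\pi_p$ in $\Z[i]$, unique factorization parameterizes each admissible $\omega$, up to units, by exponents $\alpha_p\in[0,2e_p]$; the condition $\bar\xi_0\mid\omega$ then confines $\alpha_p$ to an interval of length $e_p$. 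The arc condition becomes the Diophantine inequality
\[
L(\alpha):=\sum_p 2(\alpha_p-e_p)\arg\pi_p\;\in\;\mbox{window of length }O(1/\sqrt\lambda)\pmod{\pi/2}.
\]

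\emph{Counting and main obstacle.} Since $\sum_p e_p\leq\log_2 n=O(\log\lambda)$, the total range of $L$ as a real number is $\leq\pi\sum_p e_p=O(\log\lambda)$. A naive pigeonhole over the $O(\sqrt\lambda\log\lambda)$ sub-windows of length $1/\sqrt\lambda$ covering this range gives an average of $\ll 1$ tuple per window. The main obstacle is to upgrade this average bound to a worst-case bound of $O(\log\lambda)$, ruling out accidental clustering in a single window when the $\arg\pi_p\pmod{\pi/2}$ are nearly commensurable. I would handle this by examining pairwise differences $\alpha^{(k)}-\alpha^{(k')}$ of admissible tuples: these lie in a doubled window, and the corresponding Gaussian integer divisors of $n^2$ admit a divisor-style count controlled by $\log\lambda$ using the multiplicative structure of $\Z[i]$, beyond mere Diophantine equidistribution.
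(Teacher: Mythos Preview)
Your reduction step—partitioning into sub-arcs of length $\leq\sqrt\lambda/2$—is exactly what the paper does, so the issue is entirely in the base case $c=1/2$.

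Here the paper takes a completely different and much shorter route: it quotes the Cilleruelo--C\'ordoba product inequality, which says that for any $m$ distinct lattice points $P_1,\dots,P_m$ on the circle of radius $\lambda$,
\[
\prod_{i<j}|P_i-P_j|\ \geq\ \lambda^{e(m)},\qquad e(m)\sim m^2/4.
\]
If all $P_i$ lie in an arc of diameter $D<\sqrt\lambda/2$, the left side is at most $D^{\binom{m}{2}}$, and comparing exponents forces $m\ll\log\lambda$ in one line. No Gaussian-integer parameterization, no window analysis.

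Your Gaussian-integer setup is correct as far as it goes, and you correctly isolate the real difficulty: the average count over windows is $O(1)$, but you need a worst-case bound. The gap is in your proposed resolution. Looking at pairwise differences $\alpha^{(k)}-\alpha^{(k')}$ amounts to considering the Gaussian integers $\xi_k\bar\xi_{k'}$ of modulus $n$ with argument in the doubled window; but a ``divisor-style count'' on Gaussian divisors of $n$ or $n^2$ naturally gives $d(n)$-type bounds, i.e.\ $\lambda^{o(1)}$, not $\log\lambda$. Getting down to $\log\lambda$ is exactly what is nontrivial here, and nothing in your sketch explains how the multiplicative structure yields a logarithmic rather than sub-polynomial bound. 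In effect you have restated the problem in the language of $\Z[i]$ without supplying the structural input that the Cilleruelo--C\'ordoba inequality provides. (Their inequality is itself proved via Gaussian-integer integrality, but through a specific product/discriminant argument, not a divisor count.) Either invoke their result directly, as the paper does, or spell out a concrete mechanism that beats $\lambda^{o(1)}$.
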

\begin{proof}
To see this, we recall that Cilleruelo and C\'ordoba \cite{CC}
showed that if  $P_1,\dots,P_m \in \vE$ are  distinct lattice points
on the circle of radius $\lambda$, then
\begin{equation}\label{CC ineq}
 \prod_{1\leq i< j\leq m} |P_i-P_j| \geq \lambda^{e(m)},
\quad e(m) = \begin{cases} \frac m2(\frac m2-1),& m \mbox{ even}\\
\frac 14(m-1)^2,& m \mbox{ odd}
             \end{cases}
\end{equation}

Thus if $P_1,\dots,P_m \in \vE$  lie in an arc of diameter $D=\max
|x-y|<\sqrt{\lambda}/2$, then by \eqref{CC ineq} we find
\begin{equation}
  D^{m(m-1)/2} \geq \lambda^{m(m-2)/4}
\end{equation}
and hence
\begin{equation}\label{slight arc}
m\leq 
\frac{\log \lambda}{2\log 2}+1 \ll \log \lambda
\end{equation}
Now for an arc of length $c\sqrt{\lambda}$, $c>1/2$, divide it into
$\approx 2c$ smaller arcs of length $\sqrt{\lambda}/2$ and use
\eqref{slight arc} to find that it contains  $\ll c\log \lambda $
lattice points.
\end{proof}

\subsection{Medians}\label{sec:medians}

Given a pair of   points $\mu$, $\nu$ on the circle $|x|=\lambda$,
their median is $z=\frac 12(\mu+\nu)$. This gives a map from pairs
of  points on the circle $\lambda S^1$ to points in the disc of
radius $\lambda$:
$$z: \lambda S^1 \times \lambda S^1
\to \{|z|\leq \lambda\}$$ By definition, if $\mu=\nu$ then $z=\mu$.
Note that the origin is the median of all pairs of antipodal points
$\{\mu,-\mu\}$.


Conversely, given a nonzero point in the interior of the punctured
disk $\{0<|x|< \lambda\}$, we can display it as the median of a
unique (unordered) pair of points obtained as the intersection of
the circle $\lambda S^1$ with line through $z$ perpendicular to the
radial line between $z$ and the origin, see Figure~\ref{fig:median}.
\begin{figure}[h]
\includegraphics[width=0.4\textwidth]{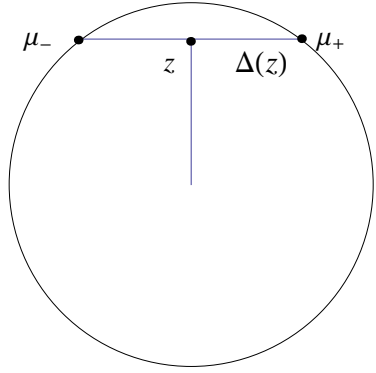}%
\caption{The median map and its inverse} \label{fig:median}
\end{figure}
In fact the formula for these points is
\begin{equation}\label{eq for mu}
\mu_\pm(z) =z \pm \Delta(z) \frac{z^\perp}{|z^\perp|}
\end{equation}
where if $z=(x,y)$ then $z^\perp = (-y,x)$, and
where we set (see Figure~\ref{fig:median})
\begin{equation}
\Delta(z) = \sqrt{\lambda^2-|z|^2}= \frac 12 |\mu_+(z)-\mu_-(z)|
\end{equation}

Let $\mathcal Z=\mathcal Z_\lambda  $ be the set of medians of integer points
with $|\mu|=\lambda$. 
Note that $\#\mathcal Z\leq (\#\vE)^2\ll \lambda^{o(1)}$.

\begin{lemma}\label{stability lemma}
Given  vectors $z,v\in \R^2$, the number of $w\in \mathcal Z_\lambda$
for which
\begin{equation}\label{eq: restrict mu}
|\mu_+(w)-v|<\sqrt{\lambda}
\end{equation}
and
\begin{equation}
|w-z|<\lambda^{1/3}
\end{equation}
 is at most $O(B_\lambda)$.
\end{lemma}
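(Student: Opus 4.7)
The plan is to use the two hypotheses in sequence to pin down the pair $\{\mu_+(w),\mu_-(w)\}\subset\vE$, since $w$ is recovered as their midpoint. Recall that for any nonzero $w\in \mathcal Z_\lambda$, the inversion formula \eqref{eq for mu} guarantees that $\mu_\pm(w)\in\vE$ and that $w=\tfrac12(\mu_+(w)+\mu_-(w))$; the single degenerate case $w=0$ contributes at most one point and will be absorbed into the implied constant.

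First I would apply the constraint $|\mu_+(w)-v|<\sqrt\lambda$ together with $\mu_+(w)\in\vE$. If the disc of radius $\sqrt\lambda$ about $v$ meets the circle $|x|=\lambda$ at all, projecting $v$ radially to a nearest point $v'$ on that circle gives $|\mu_+(w)-v'|<2\sqrt\lambda$, and the corresponding subarc of $|x|=\lambda$ can be covered by $O(1)$ arcs of length $\sqrt\lambda$, each centered on the circle. The definition of $B_\lambda$ therefore leaves at most $O(B_\lambda)$ possibilities for $\mu_+(w)$.

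Second, for each such fixed value $\mu:=\mu_+(w)\in\vE$, I would use the other constraint $|w-z|<\lambda^{1/3}$. Combined with the identity $\mu_-(w)=2w-\mu$ it forces $\mu_-(w)$ to lie within Euclidean distance $2\lambda^{1/3}$ of $2z-\mu$; in particular $\mu_-(w)$ sits on an arc of length $O(\lambda^{1/3})$ of $|x|=\lambda$. Jarnik's theorem, quoted in \S~\ref{sec:lattice pts}, then bounds the number of lattice points on such an arc by $O(1)$. Multiplying the two counts produces $O(B_\lambda)$ admissible pairs, hence $O(B_\lambda)$ admissible $w$, which is the desired conclusion. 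I do not expect a real obstacle here; the lemma essentially packages the arithmetic input $B_\lambda$ at scale $\sqrt\lambda$ with Jarnik's sharper bound at the smaller scale $\lambda^{1/3}$.
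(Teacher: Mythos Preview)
Your argument is correct and is essentially the same as the paper's: first bound the choices for $\mu_+(w)$ via the definition of $B_\lambda$, then fix $\mu_+(w)$ and use $\mu_-(w)=2w-\mu_+(w)$ together with $|w-z|<\lambda^{1/3}$ and Jarnik's theorem to leave $O(1)$ choices for $\mu_-(w)$. Your treatment of step one (projecting $v$ to the circle and covering by $O(1)$ arcs of length $\sqrt\lambda$) is in fact slightly more careful than the paper's, which simply asserts the $B_\lambda$ bound directly.
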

\begin{proof}
The medians $w $ satisfying \eqref{eq: restrict mu} have their
corresponding lattice points $\mu_+(w)$ each lying in an arc of
length about $ \sqrt{\lambda}$, and hence there are at most
$B_\lambda$ possibilities for $\mu_+(w)$. 
Given $\mu_+(w)$, we have at most $2$ possibilities for $\mu_-(w)$:
Indeed, since $w=(\mu_+(w)+\mu_-(w))/2$, we have
\begin{equation}
\mu_-(w) = 2w-\mu_+(w) = 2z-\mu_+(w) +2(w-z)
\end{equation}
Since $|w-z|<\lambda^{1/3}$, given $z$ and $\mu_+(w)$ we know
$\mu_-(w)$ up to an error of  $O(\lambda^{1/3})$; by Jarnik's
theorem, which states that an arc of size $\lambda^{1/3}$ contains
at most two lattice points, this implies there are at most two
possibilities for $\mu_-(w)$.

Since $w=(\mu_+(w)+\mu_-(w))/2$ is determined by knowing both
$\mu_\pm(w)$, we see that there are at most $O(B_\lambda)$
possibilities for $w$.
 \end{proof}


\section{An oscillatory integral along the curve}

\subsection{Phase functions on the curve}

Let $\TT^2 = \R^2/2\pi \Z^2$ be the standard flat torus. An
eigenfunction $F$ of the Laplacian on $\TT^2$ with eigenvalue
$\lambda^2$ has a Fourier expansion
\begin{equation}
F(x) = \sum_{\mu\in \vE} a_\mu e^{i\langle \mu , x\rangle }
\end{equation}
For $F$ to be real valued forces $\overline{a_\mu} = a_{-\mu}$.
The supremum of $F$ is bounded by
\begin{equation}\label{Linfty}
||F||_\infty \leq \sum_{\mu\in \vE} |a_\mu| \leq \frac 1{2\pi} ||F||_2\sqrt{\#\vE}
\end{equation}
We normalize so that
\begin{equation}
4\pi^2 ||F||_2^2 = \sum_{\mu\in \vE} |a_\mu|^2 = 1
\end{equation}

Let $\gamma:[0,L]\to \curve$ be an arc-length parameterization of
$\curve$, so that $\gamma'(t)$ is the unit tangent vector to the
curve at the point $\gamma(t)$. Denote by $n(t)$ the standard unit
normal to the curve at the point $\gamma(t)$, so that $ \gamma''(t)
= \kappa(t)n(t)$ with $\kappa(t)$ the curvature. Let $K_{\min}>0$
and $K_{\max}$ be the minimum and maximum values of the curvature,
so that
\begin{equation}\label{pinching}
0<K_{\min}\leq\kappa(t)\leq K_{\max}
\end{equation}
By shrinking the curve $\curve$, we may assume that its total
curvature is $<\pi/2$.

 We denote $f(t)=F( \gamma(t))$. Using the
Fourier expansion of $F$, we write
\begin{equation}
f(t) = \sum_{\mu\in \vE} a_\mu e^{i\langle \mu , \gamma(t)\rangle }
=   \sum_{\mu\in \vE} a_\mu e^{i \lambda \phi_\mu(t)}
\end{equation}
where the phase function $\phi_\mu$ is
\begin{equation}
\phi_\mu(t) = \langle \frac{\mu}{|\mu|},\gamma(t)\rangle
\end{equation}
The derivative of $\phi_\mu$ is
\begin{equation}
 \phi'_\mu(t) = \langle \frac{\mu}{|\mu|}, \gamma'(t)\rangle =
\sin \alpha_\mu(t)
\end{equation}
where $\alpha_\mu(t)$ is the angle between the normal vector $n(t)$
and $\mu$. Since we assume the total curvature of the curve is
$<\pi/2$, the change in the angle $\alpha_\mu$ is less than $\pi/2$.
The second derivative is
\begin{equation}
 \phi''_\mu(t) = \langle \frac{\mu}{|\mu|}, \gamma''(t)\rangle =
\kappa(t) \cos \alpha_\mu(t)
\end{equation}
By \eqref{pinching},
\begin{equation}\label{bound on phi''}
| \phi''_\mu|\leq K_{\max}
\end{equation}
The third derivative is
\begin{equation}
\phi_\mu''' = \langle \frac\mu{|\mu|},\gamma''' \rangle = \langle
\frac\mu{|\mu|}, \kappa'n + \kappa n' \rangle =\langle
\frac\mu{|\mu|}, \kappa'n- \kappa^2 \gamma' \rangle
\end{equation}
(since $n'=-\kappa \gamma'$) and hence, since $n\perp \gamma'$,
\begin{equation}\label{bound on phi'''}
|\phi_\mu'''|\leq  (|\kappa'|_\infty^2+K_{\max}^4)^{1/2}
\end{equation}
is bounded independent of $\mu$.

\begin{lemma}\label{lem length Bmu}
For $0<\prho\ll 1$ sufficiently small, let $B_\mu$ be the set of points  where $|
\phi'_\mu(t) |<2\prho$. Then $B_\mu$  is an interval and
\begin{equation}\label{length Bmu}
\length B_\mu\ll \frac {\prho}{K_{\min}}
\end{equation}
\end{lemma}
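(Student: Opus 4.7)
The plan is to exploit the formulas $\phi'_\mu(t)=\sin\alpha_\mu(t)$ and $\phi''_\mu(t)=\kappa(t)\cos\alpha_\mu(t)$ together with the pinching $0<K_{\min}\le\kappa\le K_{\max}$ and the assumption that the total curvature of $\curve$ is $<\pi/2$.

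First I would note that since $n(t)$ is the unit normal to $\g$ and $\g''=\kappa n$, the angle $\alpha_\mu(t)$ between $n(t)$ and $\mu$ evolves with $|\alpha'_\mu(t)|=\kappa(t)\ge K_{\min}>0$, so $\alpha_\mu$ is strictly monotone in $t$. Because the total curvature is less than $\pi/2$, the image of $\alpha_\mu$ lies in an interval of length $<\pi/2$, in particular in an interval that contains at most one zero of $\cos$.

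The key observation is then that on $B_\mu$ the second derivative $\phi''_\mu=\kappa\cos\alpha_\mu$ is bounded away from zero with a definite sign. Indeed on $B_\mu$ we have $|\sin\alpha_\mu|<2\prho$, hence $|\cos\alpha_\mu|\ge\sqrt{1-4\prho^2}>1/2$ once $\prho$ is small. Since $\alpha_\mu$ ranges over an interval short enough to avoid a sign change of $\cos$, the sign of $\cos\alpha_\mu$, and thus of $\phi''_\mu$, is constant on $B_\mu$, with
\begin{equation}
|\phi''_\mu(t)|\ge \tfrac12 K_{\min}\qquad\text{for all }t\in B_\mu.
\end{equation}
Consequently $\phi'_\mu$ is strictly monotone on $B_\mu$, which immediately shows that $B_\mu=\{t:|\phi'_\mu(t)|<2\prho\}$ is an interval (the preimage of an interval under a monotone function).

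For the length bound, I would simply apply the mean value theorem: if $t_1<t_2$ are the endpoints of $B_\mu$, then
\begin{equation}
4\prho\ge |\phi'_\mu(t_2)-\phi'_\mu(t_1)|=\Bigl|\int_{t_1}^{t_2}\phi''_\mu(t)\,dt\Bigr|\ge \tfrac12 K_{\min}(t_2-t_1),
\end{equation}
so $\length B_\mu\le 8\prho/K_{\min}$, which is the claimed \eqref{length Bmu}. The only substantive point — and really the only thing to watch out for — is the geometric one: making sure the sign of $\cos\alpha_\mu$ does not flip on $B_\mu$, which is exactly what the assumption on total curvature buys us.
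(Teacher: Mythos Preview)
Your argument is essentially the paper's, and the length estimate via the mean value theorem matches it verbatim. One small logical slip: you deduce that $B_\mu$ is an interval from ``$\phi'_\mu$ is strictly monotone on $B_\mu$'', but monotonicity on a set does not force that set to be connected---this step is circular as written. The fix is already in your hands: since $\alpha_\mu$ is strictly monotone on all of $[0,L]$ and its image is an interval of length $<\pi/2$, that image meets $\{\alpha:|\sin\alpha|<2\prho\}$ in at most one component, so $B_\mu=\alpha_\mu^{-1}\bigl(\{|\sin\alpha|<2\prho\}\bigr)$ is an interval. This is exactly how the paper phrases it.
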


\begin{proof}
We have
\begin{equation}
| \phi'_\mu(t) |= |\langle \frac{\mu}{|\mu|}, \gamma'(t)\rangle |
 = |\sin \alpha_\mu(t)| < 2\prho
\end{equation}
Since we assume the total curvature is $<\pi/2 $,  the change in the
angle $\alpha_\mu$ is less than $\pi/2$ and hence $B_\mu = (c_,c_+)$
consists of at most a single interval.


Since $B_\mu $ is in particular connected  and
$$|\phi''_\mu(t)| =
\kappa(t)|\cos \alpha_\mu| \geq K_{\min}\sqrt{1-4\prho^2}\geq
K_{\min}/2$$
on $B_\mu$, we may  assume that $\phi''_\mu\geq
K_{\min}/2>0$ on $B_\mu$ so that $\phi'_\mu$ is monotonically
increasing. Then $\phi'_\mu(c_-)\geq -2\prho$, $\phi'_\mu(c_+)\leq
+2\prho$ and we have
\begin{equation}
4\prho \geq  \phi_\mu'(c_+)-\phi_\mu'(c_-) = (c_+-c_-)\phi''_\mu(c)
\end{equation}
for some $c\in (c_-,c_+)$ and hence
\begin{equation}
\length B_\mu =  c_+-c_- \leq  \frac{4\prho}{\phi''_\mu(c)} \leq
\frac{8\prho}{K_{\min}}
\end{equation}
as claimed.
\end{proof}

\subsection{Van der Corput's lemma}
Let $[a,b]$ be a finite interval,  $\phi\in C^\infty[a,b]$  a smooth
and real valued phase function,  and $A\in C^\infty[a,b]$ a smooth
amplitude. For $\lambda>0$ define the oscillatory integral
\begin{equation}
I(\lambda):=\int_a^b A(t) e^{i\lambda \phi(t)} dt
\end{equation}
We will need the following well-known result, due to van der Corput
(see e.g. \cite{Stein})
\begin{lemma}\label{vdClem}
Assume that $|\phi''|\geq 1$. Then
\begin{equation}\label{ineq stationary}
|I(\lambda)|\ll \frac 1{\lambda^{1/2}} \left\{||A||_\infty +
||A'||_1 \right\}
\end{equation}
If $|\phi'|\geq 1$ and moreover $\phi'$ is monotonic then
\begin{equation}\label{ineq nonstationary}
|I(\lambda)|\ll \frac 1{\lambda} \left\{||A||_\infty + ||A'||_1
\right\}
\end{equation}
the implied constants absolute.
\end{lemma}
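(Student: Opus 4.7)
The plan is the standard two-step proof of van der Corput: handle the non-stationary estimate \eqref{ineq nonstationary} first by one integration by parts, then deduce the stationary estimate \eqref{ineq stationary} by cutting out a small neighborhood of the critical point and applying \eqref{ineq nonstationary} to the complement.

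For \eqref{ineq nonstationary}, I would write $e^{i\lambda\phi}=(i\lambda\phi')^{-1}(e^{i\lambda\phi})'$ and integrate by parts:
\begin{equation*}
I(\lambda)=\left[\frac{A(t)}{i\lambda\phi'(t)}e^{i\lambda\phi(t)}\right]_a^b-\frac{1}{i\lambda}\int_a^b\left(\frac{A'(t)}{\phi'(t)}-\frac{A(t)\phi''(t)}{\phi'(t)^2}\right)e^{i\lambda\phi(t)}dt.
\end{equation*}
The boundary term is bounded by $2\|A\|_\infty/\lambda$ using $|\phi'|\geq 1$. The first interior piece is bounded by $\|A'\|_1/\lambda$. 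For the second interior piece, the key observation is that $\phi'$ monotonic forces $\phi''/(\phi')^2$ to have constant sign, so
\begin{equation*}
\int_a^b\left|\frac{\phi''(t)}{\phi'(t)^2}\right|dt=\left|\frac{1}{\phi'(b)}-\frac{1}{\phi'(a)}\right|\leq 2,
\end{equation*}
which contributes at most $2\|A\|_\infty/\lambda$. Summing yields \eqref{ineq nonstationary}.

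For \eqref{ineq stationary}, since $\phi''\geq 1$ (WLOG, otherwise negate), the derivative $\phi'$ is strictly increasing, hence has at most one zero $t_0\in[a,b]$ (if there is none, choose $t_0$ to be whichever endpoint makes $|\phi'|$ smallest nearby). For a parameter $\delta>0$ to be chosen, split $[a,b]$ into $J=[a,b]\cap(t_0-\delta,t_0+\delta)$ and its complement $J^c$. On $J$, the trivial bound gives $|\int_J Ae^{i\lambda\phi}|\leq 2\delta\|A\|_\infty$. On each component of $J^c$, the bound $\phi''\geq 1$ and the mean value theorem give $|\phi'(t)|\geq |t-t_0|\geq \delta$, and $\phi'$ is still monotonic there. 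After rescaling by $\delta$, the argument of \eqref{ineq nonstationary} (or its proof repeated verbatim with $|\phi'|\geq\delta$) yields $\ll (\lambda\delta)^{-1}(\|A\|_\infty+\|A'\|_1)$ on $J^c$. Adding,
\begin{equation*}
|I(\lambda)|\ll \delta\|A\|_\infty+\frac{1}{\lambda\delta}(\|A\|_\infty+\|A'\|_1).
\end{equation*}
Choosing $\delta=\lambda^{-1/2}$ balances the two terms and produces \eqref{ineq stationary}.

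The only subtlety I anticipate is bookkeeping in the second part: verifying that when we run the integration by parts on $J^c$ we pick up only the correct boundary contributions (including the new endpoints $t_0\pm\delta$) and that the total variation identity for $1/\phi'$ still applies under the weaker bound $|\phi'|\geq\delta$. Both points follow from the monotonicity of $\phi'$, so there is no real obstacle, only careful constant tracking.
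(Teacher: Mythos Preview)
Your proof is correct and is exactly the standard argument. Note, however, that the paper does not prove this lemma at all: it is stated as a well-known result of van der Corput with a reference to \cite{Stein}, and is used as a black box.
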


\subsection{An  oscillatory integral along a curve}
For each $0\neq \xi \in \R^2$ define a phase function on the curve
$\curve$ by
\begin{equation}
\phi_\xi(t) = \langle \frac{\xi}{|\xi|},\gamma(t) \rangle
\end{equation}

Let $A\in C^\infty[0,L]$ be a smooth  amplitude, $k$ real  and
\begin{equation}
I(k) = \int A(t)e^{i k \phi_\xi(t)}dt \;.
\end{equation}
\begin{lemma} \label{osc int curve}
For $|k|\geq 1$,
\begin{equation}
|I(k)|\ll \frac 1{|k|^{1/2}}\left\{||A||_\infty + ||A'||_1 \right\}\;,
\end{equation}
the implied constant depending only on the curve $\curve$
(independent of $\xi$).
\end{lemma}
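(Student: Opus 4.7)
The plan is to split $[0,L]$ into a ``stationary'' piece, where $|\phi_\xi'|$ is small but $|\phi_\xi''|$ is bounded below, and a ``non-stationary'' piece, where $|\phi_\xi'|$ is bounded below and $\phi_\xi'$ is monotonic, and then apply the two parts of van der Corput's lemma (Lemma~\ref{vdClem}) on each piece.

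The first step is to partition $[0,L]$ into sub-intervals on which $\phi_\xi'$ is monotonic. Since $\phi_\xi''(t) = \kappa(t)\cos \alpha_\xi(t)$ with $\kappa>0$, monotonicity of $\phi_\xi'$ fails only where $\cos\alpha_\xi$ changes sign. Because the total curvature of $\curve$ is less than $\pi/2$, the angle $\alpha_\xi$ varies in an interval of length less than $\pi/2$, so $\cos\alpha_\xi$ can change sign at most once. This produces at most two intervals of monotonicity, and it suffices to estimate $I(k)$ on each one; we work on one such sub-interval $J$.

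Next I fix a small absolute constant $\sigma>0$ and set $B = \{t\in J : |\phi_\xi'(t)|<2\sigma\}$. Lemma~\ref{lem length Bmu} (applied to $\xi$ in place of $\mu$) gives that $B$ is a (possibly empty) interval of length $O(\sigma/K_{\min})$ and that $|\phi_\xi''|\geq K_{\min}/2$ on $B$. After rescaling the phase by $2/K_{\min}$, Lemma~\ref{vdClem}(i) yields
\[
\Bigl|\int_B A(t)\, e^{ik\phi_\xi(t)}\,dt\Bigr| \ll (kK_{\min})^{-1/2}\bigl(\|A\|_\infty+\|A'\|_1\bigr).
\]
On $J\setminus B$ (at most two sub-intervals) we have $|\phi_\xi'|\geq 2\sigma$, and by the initial partition $\phi_\xi'$ is monotonic on each component; rescaling the phase by $1/(2\sigma)$ and applying Lemma~\ref{vdClem}(ii) gives a bound of $\ll (k\sigma)^{-1}\bigl(\|A\|_\infty+\|A'\|_1\bigr)$ on each such component.

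Adding the contributions and using $k\geq 1$ yields $|I(k)|\ll k^{-1/2}\bigl(\|A\|_\infty+\|A'\|_1\bigr)$, with implied constant depending only on $K_{\min}$, the length $L$, and the choice of $\sigma$, hence only on $\curve$. The one point requiring attention is that Lemma~\ref{vdClem}(ii) needs $\phi_\xi'$ to be monotonic on $J\setminus B$; this is precisely why the argument starts by partitioning $[0,L]$ into intervals on which $\phi_\xi''$ has a fixed sign.
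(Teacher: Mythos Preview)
Your proof is correct and follows essentially the same route as the paper's: both exploit the bound on total curvature to see that $\phi_\xi''=\kappa\cos\alpha_\xi$ has at most one sign change, then cut $[0,L]$ into $O(1)$ subintervals on each of which one of the two hypotheses of Lemma~\ref{vdClem} applies. The only cosmetic differences are that the paper fixes the threshold at $1/2$ (rather than an abstract $2\sigma$) and organizes the decomposition directly into ``at most $4$ segments,'' whereas you first split by monotonicity and then carve out the set $B$ via Lemma~\ref{lem length Bmu}; the resulting pieces and the subsequent application of van~der~Corput are the same.
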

\begin{proof}
We wish to apply Lemma~\ref{vdClem}.  Since  the total curvature of
$\curve$ is $<\pi/2$, each of the phase functions $\phi_\xi$ has at
most one stationary point (at a point where $\xi$ is normal to the
curve). Moreover $\phi_\xi''(t) = \kappa(t)\cos\alpha_\xi(t)$  has
at most one sign change since we restrict the total curvature to be
$<\pi/2$.

Near a stationary point $t_0$, we
have $|\phi'_\xi(t)|<1/2$ if $|t-t_0|<1/(2K_{\max})$, since
\begin{equation}
|\phi_\xi'(t)| = |\phi_\xi'(t)-\phi_\xi'(t_0)| = |t-t_0| \cdot |\phi_\xi''(t_1)|\leq
K_{\max}|t-t_0|
\end{equation}
If $|\phi'_\xi(t)|<1/2$, then
\begin{equation}
|\phi_\xi''(t)| = \kappa(t) |\cos \alpha_\xi(t) |  =
\kappa(t)\sqrt{1-\phi_\xi'(t)^2} \geq K_{\min}\frac{\sqrt{3}}2
\end{equation}

Hence we may cut the curve, that is the arc-length parameter
interval $[0,L]$, into at most $4$ segments on each of which either
$|\phi_\xi''|\geq \frac{\sqrt{3}}2 K_{\min}>0$ or $|\phi_\xi'|\geq
1/2$ and $\phi_\xi''$ does not change sign, hence $\phi_\xi'$ is
monotonic. Then we can invoke Lemma~\ref{vdClem} to deduce that
either \eqref{ineq nonstationary} or \eqref{ineq stationary} hold,
and since $|k|\geq 1$ we have \eqref{ineq stationary} valid in both
cases.
\end{proof}


\section{A bilinear inequality on the curve}
As before let $\vE = \{\mu\in \Z^2:|\mu|=\lambda\}$. For each
$\mu\in \vE$ let $h_\mu(t)\in C_c^1(\R)$ and $a_\mu \in \C$ with
$\sum_{\mu\in \vE} |a_\mu|^2 = 1$. Let
\begin{equation}
H(t) := \sum_{\mu\in \vE} a_\mu h_\mu(t)e^{i\langle \mu, \gamma(t)
\rangle}
\end{equation}
\begin{lemma}\label{bilinear}
\begin{equation}
||H||_2^2 \leq 2 \max_{\mu\in \vE} ||h_\mu||_2^2  + O( \frac
{\#\vE}{\lambda^{1/6}} \left\{ \max_{\mu\in \vE} ||h_\mu||_\infty^2
+ \max_{\mu\in \vE}||h_\mu||_\infty \max_{\mu\in
\vE}||h_\mu'||_1\right\})
\end{equation}
\end{lemma}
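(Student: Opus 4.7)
The plan is to expand the $L^2$ norm as a double sum in $\mu,\nu\in\vE$, isolate the diagonal contribution, and control the off-diagonal terms using the oscillatory integral bound already proved along $\curve$. Write
\begin{equation*}
\|H\|_2^2 = \sum_{\mu,\nu\in\vE} a_\mu \bar a_\nu \int h_\mu(t)\overline{h_\nu(t)}\, e^{i\langle \mu-\nu,\gamma(t)\rangle}\,dt,
\end{equation*}
and note that the diagonal ($\mu=\nu$) contributes $\sum_\mu |a_\mu|^2 \|h_\mu\|_2^2 \le \max_\mu \|h_\mu\|_2^2$, since $\sum|a_\mu|^2=1$.

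For the off-diagonal I would split according to whether the pair is \emph{close}, $|\mu-\nu|\le \lambda^{1/3}$, or \emph{far}, $|\mu-\nu|>\lambda^{1/3}$. For close pairs, Jarnik's theorem (an arc of length $\lambda^{1/3}$ on $|x|=\lambda$ contains at most two lattice points) implies that for each $\mu$ there is at most one $\nu\neq\mu$ with $|\mu-\nu|\le\lambda^{1/3}$. Estimate the inner integral trivially by Cauchy--Schwarz as $\|h_\mu\|_2\|h_\nu\|_2\le\max_\mu\|h_\mu\|_2^2$, then use $|a_\mu a_\nu|\le\tfrac12(|a_\mu|^2+|a_\nu|^2)$ and the symmetry $\mu\leftrightarrow\nu$ to bound the total close contribution by $\max_\mu\|h_\mu\|_2^2$. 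Combined with the diagonal this yields the main term $2\max_\mu\|h_\mu\|_2^2$ in the statement.

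For far pairs the key is to apply Lemma~\ref{osc int curve} to the phase $\phi_{\mu-\nu}$ with parameter $k=|\mu-\nu|\ge\lambda^{1/3}>1$ and amplitude $A=h_\mu\bar h_\nu$. Using $\|A\|_\infty\le\max_\mu\|h_\mu\|_\infty^2$ and, by the product rule, $\|A'\|_1\le 2\max_\mu\|h_\mu\|_\infty\,\max_\mu\|h_\mu'\|_1$, the lemma gives
\begin{equation*}
\Bigl|\int h_\mu\bar h_\nu\, e^{i\langle\mu-\nu,\gamma\rangle}dt\Bigr|
\ll |\mu-\nu|^{-1/2}\Bigl(\max_\mu\|h_\mu\|_\infty^2 + \max_\mu\|h_\mu\|_\infty\,\max_\mu\|h_\mu'\|_1\Bigr)
\end{equation*}
and invoking $|\mu-\nu|>\lambda^{1/3}$ converts the prefactor into $\lambda^{-1/6}$. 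Summing over far pairs and using Cauchy--Schwarz on the coefficients, $\sum_{\mu,\nu}|a_\mu a_\nu|\le(\sum|a_\mu|)^2\le\#\vE\cdot\sum|a_\mu|^2=\#\vE$, yields the stated error of size $O(\#\vE/\lambda^{1/6}\cdot\{\cdots\})$.

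There is no real obstacle: Lemma~\ref{osc int curve} does the analytic heavy lifting and Jarnik's theorem controls the combinatorics of close pairs. The only subtlety is the bookkeeping that produces the factor $2$ (instead of $1$) in front of $\max_\mu\|h_\mu\|_2^2$, coming from the single Jarnik-close neighbor of each $\mu$ for which oscillatory cancellation cannot yet be invoked.
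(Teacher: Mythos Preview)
Your proposal is correct and follows essentially the same argument as the paper: expand $\|H\|_2^2$ as a double sum, handle close pairs $|\mu-\nu|\le\lambda^{1/3}$ via Jarnik's theorem and Cauchy--Schwarz to get the main term $2\max_\mu\|h_\mu\|_2^2$, and bound distant pairs by Lemma~\ref{osc int curve} together with $\sum_{\mu,\nu}|a_\mu a_\nu|\le\#\vE$. The only cosmetic difference is that the paper groups the diagonal $\mu=\nu$ with the close pairs from the outset, whereas you separate it and then recombine.
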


\begin{proof}
Multiplying out gives
\begin{equation}\label{double sum}
||H||_2^2 = \sum_{\mu,\nu} a_\mu \bar a_\nu \int h_\mu(t)
\overline{h_\nu(t)} e^{i|\mu-\nu|\phi_{\mu-\nu}(t)}dt
\end{equation}
where if $\mu=\nu$ we set $\phi_0(t) \equiv 1$.
We separate the double sum  \eqref{double sum} to  a sum over "close" pairs
$(\mu,\nu)$, that is such that $|\mu-\nu|<\lambda^{1/3}$, and to a
sum over  the remaining "distant" pairs. We claim that the "close"
pairs contribute
\begin{equation}\label{close pairs1}
\mbox{close }  \leq 2 \max_{\mu\in
\vE} ||h_\mu||_2^2
\end{equation}
while the "distant" pairs contribute at most
\begin{equation}
\mbox{distant } \ll \frac {\#\vE}{\lambda^{1/6}} \left\{
\max_{\mu\in \vE} ||h_\mu||_\infty^2 + \max_{\mu\in
\vE}||h_\mu||_\infty \max_{\mu\in \vE}||h_\mu'||_1\right\}
\end{equation}
\subsubsection{Close pairs}
Given $\mu \in \vE$, certainly we can take $\nu=\mu$ to get a
"close" pair. By Jarnik's theorem \cite{Jarnik}, given $\mu\in \vE$
there is at most one other element of $\vE$ at distance $\leq
\lambda^{1/3}$ from $\mu$, call it $\tilde \mu$ (if it exists).
Estimating the integral trivially by
\begin{equation}
\left|\int h_\mu(t) \overline{h_\nu(t)} e^{i\langle
\mu-\nu,\gamma(t)\rangle} dt  \right| \leq ||h_\mu||_2 \cdot
||h_\nu||_2
\end{equation}
we find  that the contribution of "close" pairs is bounded by
\begin{equation}
\max_\mu   ||h_\mu||_2^2 \cdot \sum_{\mu\in \vE} |a_\mu|^2 +
|a_\mu||a_{\tilde \mu}|
\end{equation}
If $\mu$ does not have a close neighbor other than itself, the term
$a_\mu a_{\tilde \mu}$ is zero. Otherwise, use $|a_\mu a_{\tilde
\mu}|\leq \frac 12 ( |a_\mu|^2 + |a_{\tilde \mu}|^2)$.  Since each
$\mu$ has at most one such close neighbor $\tilde \mu$, the sum over
all $\mu \in \vE$ is at most
\begin{equation}
\sum_{\mu\in \vE}   |a_\mu||a_{\tilde \mu}| \leq \sum_{\mu\in
\vE}\frac 12 ( |a_\mu|^2 + |a_{\tilde \mu}|^2) \leq \sum_{\mu\in
\vE} |a_\mu|^2 = 1
\end{equation}
and hence
\begin{equation}
\mbox{close }\leq 2\max_\mu   ||h_\mu||_2^2
\end{equation}

\subsubsection{Distant pairs}
We now bound the contribution of pairs $\mu,\nu$ with
$|\mu-\nu|>\lambda^{1/3}$ by
\begin{equation}
\mbox{distant }\leq \sum_{|\mu-\nu|>\lambda^{1/3}} |a_\mu||a_\nu|
|I(\mu,\nu)|
\end{equation}
where
\begin{equation}
I(\mu,\nu):= \int h_\mu(t)\overline{h_\nu(t)} e^{i\langle
\mu-\nu,\gamma(t)\rangle} dt = \int
A_{\mu,\nu}(t)e^{i|\mu-\nu|\phi_{\mu-\nu}(t)}dt
\end{equation}
where $A_{\mu,\nu} =h_\mu(t)\overline{h_\nu(t)}$.

By Lemma~\ref{osc int curve},
\begin{equation}\label{bound for I(mu,nu)}
\begin{split}
|I(\mu,\nu)|& \ll \frac 1{|\mu-\nu|^{1/2}} (||A_{\mu,\nu}||_\infty
+||A'_{\mu,\nu}||_1) \\ & \ll \frac 1{\lambda^{1/6}}\left\{
\max_{\mu\in \vE} ||h_\mu||_\infty^2 + \max_{\mu\in
\vE}||h_\mu||_\infty \max_{\mu\in \vE}||h'_\mu||_1\right\}
\end{split}
\end{equation}


Using \eqref{bound for I(mu,nu)}  and $\sum_{\mu,\nu\in  \vE} |a_\mu
a_\nu|\leq \#\vE \sum_\mu |a_\mu|^2=\#\vE$ we find that
\begin{equation}
\begin{split}
\mbox{distant } &\leq \sum_{\mu,\nu}|a_\mu| |a_\nu|
\max_{|\mu-\nu|>\lambda^{1/3} } I(\mu,\nu) \\
& \ll \frac{\#\vE}{\lambda^{1/6}}\left\{ \max_{\mu\in \vE}
||h_\mu||_\infty^2 + \max_{\mu\in \vE}||h_\mu||_\infty \max_{\mu\in
\vE}||h'_\mu||_1\right\}
\end{split}
\end{equation}
as claimed.
\end{proof}


\section{Proof of Theorem~\ref{thm N in terms of m int}}\label{sec: of of thm on N}
\subsection{Overview}
We denote $f(t)=F( \gamma(t))$, which is real valued,  and want to count zeros of $f$ on
$[0,L]$.   The idea is to detect sign changes of $f$ by comparing
$\int  |f|$ and $|\int f |$.

 Let $\paramone$ be a parameter, which we will want to satsify $1\ll \paramone  = o(\lambda)$, and consider a
partition of unity $\{\tau_j\}_{j\in J}$ of the interval $[0,L]$,
where $\tau_j \geq 0$, $\sum_j \tau_j = \mathbf 1_{[0,L]}$, so that
\begin{enumerate}
\item $\#J\approx \lambda/\paramone$
\item  $\tau_j$ supported in an interval of length $\approx
\paramone/\lambda$
\item
$ | \partial^r \tau_j/\partial t^r | \ll ( \lambda/
\paramone)^r $
\item  for each $j$, there is at most $O(1)$ values of $k$ for which
$\tau_j\tau_k\neq 0$ (independent of $\lambda$). In particular for
each point $t$ there is at most $O(1)$ values of $j$ so that
$\tau_j(t)\neq 0$.
\end{enumerate}

 Let $J_0\subseteq J$ be the set of indices $j$ for which $f$ has a
 sign change on $\supp \tau_j$. Since for each point $t$ there is at
 most  $O(1)$ values of $j$ for which $t\in \supp \tau_j$, we have
 \begin{equation}
 \#\mbox{ sign changes of }f \gg  \#J_0
 \end{equation}
 so that a lower bound for $\#J_0$ gives a lower bound for the
 number of sign changes of $f$.

If $j\notin J_0$ then $f\tau_j$
 does not change sign and hence
 \begin{equation}
 \int |f|\tau_j = |\int f\tau_j|, \quad
 j\notin J_0
 \end{equation}
 Therefore
 \begin{equation}
 \int|f| = \sum_j \int|f|\tau_j = \sum_{j\notin J_0} | \int f\tau_j|
 +\int |f|\sum_{j\in J_0} \tau_j
 \end{equation}
We will show that
\begin{equation}\label{bd on sum J_0}
\int |f|\sum_{j\in J_0}\tau_j \ll (\frac{\# J_0
\paramone}{\lambda})^{1/2}
\end{equation}
and that
\begin{equation}\label{bd on sum not in J0}
\sum_{j\notin J_0}|\int  f \tau_j | \ll \paramone^{-1/3}
\end{equation}
so that
\begin{equation}
\int|f| \ll (\frac{\# J_0
\paramone}{\lambda})^{1/2} + \paramone^{-1/3}
\end{equation}
Taking $\paramone^{-1/3} =\delta \int|f|$ with $\delta>0$
sufficiently small gives
\begin{equation}
\#J_0 \gg \lambda (\int|f|)^5
\end{equation}
which proves Theorem~\ref{thm N in terms of m int}.
Note  that our choice of $\paramone$ indeed satisfies our requirements, indeed $1\ll \paramone \ll \lambda^{o(1)}$ since
$\int|f|\ll (\int|f|^2)^{1/2} \ll ||F||_2\approx 1$,
by the upper bound in the  uniform $L^2$-restriction theorem \cite{BRCRAS}, and  $\int |f| \gg \lambda^{-o(1)}$
from the lower bound in the uniform $L^2$-restriction theorem, see \eqref{bd for m}.

\subsection{Proof of \eqref{bd on sum J_0}}
 By Cauchy-Schwarz,
\begin{equation}
\int |f|\sum_{j\in J_0} \tau_j \leq ||f||_2 \{\int (\sum_{j\in J_0}
\tau_j)^2 \}^{1/2} = ||f||_2 \{\sum_{j,k\in J_0} \int \tau_j \tau_k
\}^{1/2}
\end{equation}
By the restriction upper bound of \cite{BRCRAS}, $||f||_2\ll 1$.
Given $j$, we have $\int \tau_j\tau_k=0$ except for $O(1)$ indices
$k$ (independent of $j$), including $k=j$, and for such $k$ we have
\begin{equation}
\int \tau_j\tau_k \leq \frac 12(\int \tau_j^2 + \int \tau_k^2) \leq
\max_k \int \tau_k^2
\end{equation}
Since
\begin{equation}
\int \tau_k^2\leq \int_{\supp \tau_k} 1 \ll
\frac{\paramone}{\lambda}
\end{equation}
we obtain
\begin{equation}
\int (\sum_{j\in J_0} \tau_j)^2 \ll \#J_0 \max_k \int \tau_k^2 \ll
\frac{\# J_0 \paramone}{\lambda}
\end{equation}
and hence
\begin{equation}
\sum_{j\in J_0}\int |f|\tau_j \ll (\frac{\# J_0
\paramone}{\lambda})^{1/2}
\end{equation}

\subsection{Proof of \eqref{bd on sum not in J0}}
Our goal is to show that
\begin{equation}
\sum_{j\notin J_0}  \left| \int f(t)\tau_j(t) dt \right|
\end{equation}
 is small.

Let $\prho> 0$ be a (small) parameter, $\lambda^{-o(1)} <\prho<1$
 and $0\leq \theta(x) \leq 1$ a
smooth, even function so that $\theta(x)=1$ if $|x|<1$,
$\theta(x)=0$ for $|x|>2$ and set $\theta_\prho(x) = \theta(\frac
x\prho)$.

Write $f=f_0+f_1$ where
\begin{equation}
f_0(t) = \sum_{\mu\in \vE} a_\mu
\theta_\prho(\phi'_\mu(t))e^{i\lambda\phi_\mu(t)}
\end{equation}
and
\begin{equation}
f_1(t) = \sum_{\mu\in \vE} a_\mu
(1-\theta_\prho)(\phi'_\mu(t))e^{i\lambda\phi_\mu(t)}
\end{equation}
Thus in the Fourier expansion of $f_1$, none of the phase functions
$\phi_\mu$ have a critical point in the support of $(1-\theta_\prho)\phi'_\mu$,
in fact they satisfy $|\phi'_\mu(t)| \geq \prho$.

We have
\begin{equation}
\sum_{j\notin J_0} \left| \int f(t)\tau_j(t) dt \right| \leq \sum_{j\notin J_0} \left| \int
f_0(t)\tau_j(t) dt \right| + \sum_{j\notin J_0} \left| \int f_1(t)\tau_j(t) dt
\right|
\end{equation}
 We will show that
\begin{equation}\label{terms with f0}
\sum_{j\notin J_0} \left| \int f_0(t)\tau_j(t) dt \right| \ll \prho^{1/2}
\end{equation}
and
\begin{equation}\label{terms with f1}
\sum_{j\notin J_0} \left| \int f_1(t)\tau_j(t) dt \right| \ll \frac 1{\paramone
\prho}
\end{equation}
which gives
\begin{equation}
\sum_{j\notin J_0} |\int f\tau_j| \ll \prho^{1/2} +\frac{1}{\paramone\prho}
\end{equation}
Choosing $\paramone = \prho^{-3/2}$ gives
\begin{equation}
\sum_{j\notin J_0} |\int f\tau_j| \ll \paramone^{-1/3}
\end{equation}
proving \eqref{bd on sum not in J0}.

\subsection{Proof of \eqref{terms with f0}}
We have
\begin{equation}
\sum_{j\notin J_0}\left| \int f_0(t)\tau_j(t) dt \right| \leq \int|f_0(t)|dt \ll
  ||f_0||_2
\end{equation}
and hence \eqref{terms with f0} follows from:
\begin{lemma}\label{lem |f_0|}
\begin{equation}
||f_0||_2 \ll \prho^{1/2}
\end{equation}
\end{lemma}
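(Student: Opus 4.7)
The natural approach is to apply the bilinear estimate, Lemma~\ref{bilinear}, directly with the choice
$$h_\mu(t) = \theta_\prho(\phi'_\mu(t)),$$
since then $H(t) = \sum_{\mu\in\vE} a_\mu h_\mu(t) e^{i\langle \mu,\gamma(t)\rangle}$ is exactly $f_0(t)$. What remains is to control the three input norms appearing in Lemma~\ref{bilinear}: $\max_\mu\|h_\mu\|_2^2$, $\max_\mu\|h_\mu\|_\infty$, and $\max_\mu\|h'_\mu\|_1$.

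The $L^\infty$ bound is immediate since $0\le \theta\le 1$, giving $\|h_\mu\|_\infty\le 1$. For the $L^2$ bound, note that $h_\mu$ is supported in the set $B_\mu=\{t:|\phi'_\mu(t)|<2\prho\}$, and on this set $|h_\mu|\le 1$. By Lemma~\ref{lem length Bmu}, $\length(B_\mu)\ll \prho/K_{\min}$, so $\|h_\mu\|_2^2\ll \prho$. For the derivative norm, the chain rule gives
$$h'_\mu(t) = \tfrac{1}{\prho}\,\theta'\!\bigl(\phi'_\mu(t)/\prho\bigr)\,\phi''_\mu(t),$$
which is again supported in $B_\mu$; using $|\theta'|\ll 1$ and the uniform bound $|\phi''_\mu|\le K_{\max}$ from \eqref{bound on phi''}, we obtain $\|h'_\mu\|_1 \ll \length(B_\mu)\cdot \prho^{-1}\cdot K_{\max}\ll 1$, uniformly in $\mu$.

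Feeding these three estimates into Lemma~\ref{bilinear} yields
$$\|f_0\|_2^2 \ll \prho + \frac{\#\vE}{\lambda^{1/6}}\bigl(1+1\bigr).$$
Since $\#\vE\ll \lambda^{o(1)}$ and we have chosen $\prho>\lambda^{-o(1)}$, the second term is $o(\prho)$ and is thus absorbed into the first. Hence $\|f_0\|_2^2\ll \prho$, so $\|f_0\|_2\ll \prho^{1/2}$ as claimed.

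There is no real obstacle here: the proof is a bookkeeping exercise once Lemma~\ref{bilinear} and Lemma~\ref{lem length Bmu} are in hand. The only point worth double-checking is that the cancellation $\length(B_\mu)\asymp\prho$ against the factor $\prho^{-1}$ from differentiating $\theta_\prho$ leaves $\|h'_\mu\|_1$ bounded independently of $\prho$; this is precisely what makes the bilinear lemma applicable and produces the correct $\prho^{1/2}$ gain, so the whole argument is essentially forced by the structure set up in the preceding subsections.
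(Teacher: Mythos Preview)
Your proof is correct and follows essentially the same approach as the paper: apply Lemma~\ref{bilinear} with $h_\mu=\theta_\prho(\phi'_\mu)$, bound $\|h_\mu\|_\infty\le 1$, use Lemma~\ref{lem length Bmu} to get $\|h_\mu\|_2^2\ll\prho$ and $\|h'_\mu\|_1\ll 1$, and absorb the $\#\vE/\lambda^{1/6}$ remainder using $\prho\gg\lambda^{-o(1)}$.
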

\begin{proof}
We wish to apply Lemma~\ref{bilinear} with $h_\mu =
\theta(\frac{\phi_\mu'}{\prho})$. We clearly have
$||h_\mu||_\infty\leq 1$. Moreover,
\begin{equation}
||h_\mu||_2^2 \leq \int \theta(\frac{\phi'}{\prho}) \leq \length\{t:
|\phi'_\mu(t)|<2\prho\}
\end{equation}
and hence $||h_\mu||_2^2\ll \prho$ by Lemma~\ref{lem length Bmu}.
Likewise
\begin{equation}
||h'_\mu||_1  = \int |\theta'(\frac{\phi'_\mu}{\prho})|\frac
{|\phi_\mu''|} {\prho}  \leq \frac{K_{\max}|\theta'|_\infty}{\prho}
\length\{t: |\phi'_\mu(t)|<2\prho\}
\end{equation}
and hence $||h'_\mu||_1\ll 1$. Inserting into Lemma~\ref{bilinear}
gives
\begin{equation}
||f_0||_2^2 \ll \prho + \frac{\#\vE}{\lambda^{1/6}}
\end{equation}
which gives our claim provided $\prho \gg \lambda^{-o(1)}$.
\end{proof}

\subsection{Proof of \eqref{terms with f1}}

We expand and integrate by parts
\begin{equation}
\int f_1\tau_j =\frac 1{i\lambda} \int f_2\tau_j + \frac
1{i\lambda}\int f_3\tau'_j
\end{equation}
where
\begin{equation}
f_2 = \sum_{\mu\in \vE} a_\mu
\left(\frac{1-\theta_\prho(\phi'_\mu)}{\phi'_\mu} \right)'
e^{i\lambda \phi_\mu}
\end{equation}
and
\begin{equation}
f_3 =\sum_{\mu\in \vE} a_\mu
\frac{1-\theta_\prho(\phi'_\mu)}{\phi'_\mu}  e^{i\lambda \phi_\mu}
\end{equation}
Hence
\begin{equation}
\begin{split}
\sum_{j\notin J_0}|\int f_1 \tau_j| &\leq \frac 1\lambda \left( \int |f_2|\sum_{j\notin J_0}
\tau_j  + \int |f_3|  \sum_{j\notin J_0}|\tau'_j| \right) \\
&\leq \frac 1\lambda ||f_2||_2  + \frac 1\lambda|| f_3||_2
\left\{\int(\sum_{j\notin J_0} |\tau'_j|)^2\right\}^{1/2}
\end{split}
\end{equation}
We have
\begin{equation}
\int(\sum_{j\notin J_0} |\tau'_j|)^2  = \sum_{j\notin J_0}\sum_{k\notin J_0}\int|\tau'_j\tau'_k| \ll
\sum_{j\notin J_0} \int (\tau'_j)^2
\end{equation}
since for each $j$, there are only $O(1)$ values of $k$ for which
$\tau'_j\tau'_k\neq 0$. Hence
\begin{equation}
\int(\sum_{j\notin J_0} |\tau'_j|)^2  \ll\sum_{j\notin J_0} \int (\tau'_j)^2 \ll (\frac
\lambda{\paramone})^2
\end{equation}
Therefore
\begin{equation}
\sum_{j\notin J_0}|\int f_1 \tau_j| \ll \frac 1\lambda ||f_2||_2  + \frac
1{\paramone} || f_3||_2
\end{equation}

Using Lemma~\ref{bilinear}
we find
\begin{equation}
||f_2||_2 \ll \frac 1{\prho^2}\;,\quad || f_3||_2 \ll \frac 1{\prho}
\end{equation}
once we note that
\begin{equation}
|\frac{1-\theta_\prho(\phi'_\mu)}{\phi'_\mu}|  \ll \frac 1{\prho}
\end{equation}
\begin{equation}
|\left\{\frac{1-\theta_\prho(\phi'_\mu)}{\phi'_\mu}\right\}'| \ll
\frac 1{\prho^2}
\end{equation}
and, using \eqref{bound on phi'''}, that
\begin{equation}
|\left\{\frac{1-\theta_\prho(\phi'_\mu)}{\phi'_\mu}\right\}''| \ll
\frac 1{\prho^3}
\end{equation}
(we assume throughout that $\prho\gg \lambda^{-o(1)}$).  This gives
\begin{equation}
\sum_{j\notin J_0}|\int f_1 \tau_j| \ll \frac 1\lambda \frac 1{\prho^2} +\frac
1{\paramone}\frac 1{\prho} \ll \frac 1{\paramone \prho}
\end{equation}
proving \eqref{terms with f1}.



%




\section{Relating $L^1$ and $L^4$ restriction theorems}
\label{sec:reduction to L4}



We briefly explain the relation between  $L^1$ and $L^4$ restriction
theorems given in \eqref{L1 in terms of L4 v2 int}, namely
\begin{equation}\label{L1 in terms of L4 v2}
||F||_{L^1(\curve)}\gg_\curve \frac {||F||_2^3}
{||F||^{2}_{L^4(\curve)}}
\end{equation}

By Cauchy-Schwarz, $\int_\curve|F| \ll ||F||_{L^2(\curve)}$ and by
the upper bound in the $L^2$-restriction theorem \cite{BRCRAS} we
have $ ||F||_{L^2(\curve)}\ll ||F||_2 $ so that
\begin{equation}
  \int_\curve |F| \ll ||F||_2
\end{equation}
As for lower bounds,  we certainly have $\int_\curve |F|^2 \leq
||F||_\infty \int_\curve |F|$ and combining the lower bound in the
$L^2$-restriction theorem \cite{BRCRAS}, $\int_\curve |F|^2\gg
||F||_2^2$ with the upper bound on the $L^\infty$ norm $
||F||_\infty \leq \sqrt{\#\vE}||F||_2/2\pi$ (see \eqref{Linfty})
we obtain
\begin{equation}\label{bd for m}
\frac 1{||F||_2} \int_\curve |F|  \gg \frac 1{\sqrt{\#\vE}}
\end{equation}

We want  to improve the bound \eqref{bd for m} for $\int_\curve|F|$.
To start with, we use interpolation (log-convexity of the $L^p$
norm) to give a lower bound for $
||F||_{L^1(\curve)}=\int_\curve|F|$ in terms of the $L^2$ and $L^4$
norms on the curve:
\begin{equation} \label{L1 in terms of L4 on C}
||F||_{L^2(\curve)} \leq ||F||_{L^1(\curve)}^{1/3} \cdot
||F||_{L^4(\curve)}^{2/3}
\end{equation}
which improves on \eqref{bd for m} as it does not contain any
component which is a-priori unbounded in $\lambda$.


Inserting the uniform $L^2$ restriction lower bound
$||F|||_{L^2(\curve)}\gg ||F||_2$ of  \cite{BRCRAS} into \eqref{L1
in terms of L4 on C} gives \eqref{L1 in terms of L4 v2}
as claimed.



\section{An upper bound on the restriction $L^4$ norm: Proof of
Theorem~\ref{thm L4 int}} \label{sec: the L4 norm}

The aim of this section is to reduce getting a uniform upper bound
for the  $4$-th moment $\int_\curve |F|^4$, to counting lattice
points in arcs of length $\sqrt{\lambda}$ by showing that
\begin{equation}
\int_\curve |F|^4  = \int |f(t)|^4 dt \ll B_\lambda
\end{equation}
where as in \eqref{def of B},
\begin{equation}
B_\lambda=\max_{|x|=\lambda} \#\{\xi\in \vE: |x-\xi|\leq
\sqrt{\lambda}\}
\end{equation}

\subsection{Computing $\int|f|^4$}
Recall
\begin{equation}
f(t) = \sum_{\mu} a_\mu e^{i\langle \mu,\gamma(t) \rangle}
\end{equation}
We may break up $f$ into $O(1)$ terms, each the sum over frequencies
$\mu$ lying in an arc of size $\lambda/100$. By the triangle
inequality, it suffices to prove the restriction $L^4$ bound for
such $f$, and from now on we assume that $f$ is of this form.

In order to compute the $4$-th moment $\int |f|^4$, write
\begin{equation}\label{expand f^2}
f(t)^2 = \sum_{\mu,\nu} a_\mu a_\nu e^{i\langle
\mu+\nu,\gamma(t)\rangle} =   \sum_\mu a_\mu a_{-\mu} +\sum_{0\neq
z\in \mathcal Z} b_ze^{2i\langle z, \gamma(t)\rangle}
\end{equation}
where  for a median $z=(\mu+\nu)/2\in \mathcal Z$ (see
\S~\ref{sec:medians}), we set $b_z = 2a_\mu a_\nu$. The assumption
that all the frequencies $\mu$ lie in an arc of size $\lambda/100$
implies that the medians $z\in \mathcal Z$ appearing in
\eqref{expand f^2} satisfy $|z|>\lambda/2$, and that $a_\mu
a_{-\mu}=0$ for all $\mu$.
 Observe that
\begin{equation}
\sum_{0\neq z\in \mathcal Z}|b_z|^2 \ll (\sum_\mu |a_\mu|^2)^2 =
\frac 1{(2\pi)^4} ||F||_2^4
\end{equation}


Hence we can we write
\begin{equation}
f(t)^2 = \tg_0(t) + \tg(t) 
\end{equation}
with
\begin{equation}
\tg_0(t) = \sum_{0<\Delta(z)\leq \sqrt{\lambda} } b_z e^{2i\langle
z, \gamma(t)\rangle}
\end{equation}
and
\begin{equation}
\tg(t) = \sumstar_{z} b_z e^{2i\langle z, \gamma(t)\rangle}
\end{equation}
where we denote
\begin{equation}
\sumstar_z:=\sum_{\substack{z\in \mathcal Z\\|z|\geq \lambda/2  \\
|\Delta(z)|>\sqrt{\lambda}}}
\end{equation}
 Therefore
 \begin{equation}
 ||f||_4  = ||f^2||_2^{1/2}\leq \left( ||g_0||_2 + ||g||_2 
) \right)^{1/2}
 \end{equation}
so that it suffices to show
\begin{equation}
||\tg_0||_2^2 \ll B_\lambda  ||b||^2\;,  \qquad  ||\tg||_2^2 \ll
B_\lambda ||b||^2
\end{equation}
where $b=(b_z)\in \C^{\mathcal Z}$.

By   Lemma~\ref{osc int curve},  if $z\neq w$ then
\begin{equation}
\int e^{2i\langle z-w,\gamma(t)\rangle} dt \ll \frac 1{|z-w|^{1/2}}
\end{equation}
 and since the integral is trivially bounded by $\ll 1$, we can write this for any pair $z, w\in \mathcal Z$ as
\begin{equation}
\int e^{2i\langle z-w,\gamma(t)\rangle} dt \ll \frac
1{|z-w|_+^{1/2}}
\end{equation}
where
\begin{equation}
|z|_+ = \max \left( 1, |z| \right)
\end{equation}
Therefore
\begin{equation}
\int |\tg|^2 \ll \sumstar_z\sumstar_w
\frac{|b_z||b_w|}{|z-w|_+^{1/2}}
\end{equation}
Moreover, we may restrict the sum to $|w-z|<\lambda^\epsilon$ at a
cost of $O(\lambda^{-\epsilon/2}||b||^2\#\mathcal Z) = o(||F||_2^4)$ since
$\#\mathcal Z\ll \lambda^{o(1)}$. Denoting by
\begin{equation}
\sumstar_{z,w} := \sumstar_z\sumstar_{w:\; |z-w|<\lambda^\epsilon}
\end{equation}
we have found that
\begin{equation}
||\tg||^2 \ll \sumstar_{z,w}\frac {|b_z b_w|}{|z-w|_+^{1/2}}
\end{equation}
and likewise
\begin{equation}
||\tg_0||^2 \ll \sum_{\substack{0<\Delta(z),\Delta(w)\leq
\sqrt{\lambda}\\|z-w|<\lambda^\epsilon}}\frac {|b_z
b_w|}{|z-w|_+^{1/2}}
\end{equation}
Thus we see that it suffices to show:
\begin{proposition}\label{prop bilinear schur}
Let $b=(b_z)\in \C^{\mathcal Z}$. Then
\begin{equation} \label{prop 5.1bis}
\sum_{\substack{0<\Delta(z),\Delta(w)<\sqrt{\lambda}\\|z-w|<\lambda^\epsilon}}\frac
{|b_z b_w|}{|z-w|_+^{1/2}} \ll B_\lambda ||b||^2 \tag{i}
\end{equation}
and
\begin{equation} \label{bilinear schur}
\sumstar_{z,w}\frac {|b_z b_w|}{|z-w|_+^{1/2}} \ll B_\lambda ||b||^2
\tag{ii}
\end{equation}
\end{proposition}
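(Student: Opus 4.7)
The plan is to establish both (i) and (ii) via Schur's test. By the AM--GM inequality $2|b_zb_w|\le |b_z|^2+|b_w|^2$ and the symmetry of the kernel $1/|z-w|_+^{1/2}$, both bounds reduce to
\[
\sup_z\sum_{w : |z-w|<\lambda^\epsilon}\frac{1}{|z-w|_+^{1/2}}\ll B_\lambda,
\]
with $z,w$ ranging over the appropriate subsets of $\mathcal{Z}$ (cut out by the constraint on $\Delta$). A dyadic decomposition in the distance $|z-w|\sim 2^j$, for $0\le j\le \epsilon\log_2\lambda$, reduces this further to estimating the counts $N_j(z):=\#\{w\in\mathcal{Z}:|z-w|\le 2^j\}$, since the target sum is $\ll\sum_{j} N_j(z)/2^{j/2}$.

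The counting of $N_j(z)$ will be performed via Lemma~\ref{stability lemma}: for any reference point $v\in\R^2$, the number of $w\in\mathcal{Z}$ with $|\mu_+(w)-v|<\sqrt{\lambda}$ and $|w-z|<\lambda^{1/3}$ is $\ll B_\lambda$. Since $|z-w|<\lambda^\epsilon$ implies $|z-w|<\lambda^{1/3}$ for $\epsilon<1/3$, what remains is to cover the image $\{\mu_+(w):w\in B_{2^j}(z)\cap\mathcal{Z}\}\subset \lambda S^1$ by as few balls of radius $\sqrt{\lambda}$ as possible. In Case~(i) the constraint $\Delta(z),\Delta(w)<\sqrt{\lambda}$ forces $|\mu_+(w)-w|=\Delta(w)<\sqrt{\lambda}$, and the triangle inequality gives $|\mu_+(w)-z|\le\sqrt{\lambda}+\lambda^\epsilon\ll\sqrt{\lambda}$. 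So $\mu_+(B_{2^j}(z)\cap\mathcal{Z})$ fits inside a single $\sqrt{\lambda}$-ball about $z$ and Lemma~\ref{stability lemma} yields $N_j(z)\ll B_\lambda$ uniformly in $j$; the geometric series $\sum_j B_\lambda/2^{j/2}\ll B_\lambda$ finishes (i).

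Case~(ii) requires a Lipschitz estimate for the median map. Differentiating $\mu_+(z)=z+\Delta(z)\,z^\perp/|z|$ and combining $|\Delta(z)-\Delta(w)|\le 2\lambda|z-w|/(\Delta(z)+\Delta(w))$ with the standard bound on the tangent unit vector yields
\[
|\mu_+(z)-\mu_+(w)|\ll\Bigl(1+\frac{\lambda}{\Delta(z)+\Delta(w)}\Bigr)|z-w|\ll \sqrt{\lambda}\,|z-w|.
\]
Hence $\mu_+(B_{2^j}(z)\cap\mathcal{Z})$ lies in an arc of length $O(\sqrt{\lambda}\,2^j)$, is covered by $O(\max(1,2^j))$ balls of radius $\sqrt{\lambda}$, and $N_j(z)\ll\max(1,2^j)B_\lambda$. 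Summing gives $\sum_j\max(1,2^j)B_\lambda/2^{j/2}\ll B_\lambda\lambda^{\epsilon/2}$; taking $\epsilon\to 0$ the extra factor is $\lambda^{o(1)}$, absorbed into the $\ll$ of the Proposition in the same spirit as the earlier use of $\#\mathcal{Z}\ll\lambda^{o(1)}$.

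The main technical obstacle is precisely the Case~(ii) dyadic sum near the critical regime $\Delta(z)\sim\sqrt{\lambda}$, where the Lipschitz constant $\sqrt{\lambda}$ and the kernel decay $2^{-j/2}$ balance exactly: every scale $j$ contributes the same quantity $B_\lambda$ and forces the subpolynomial $\lambda^{\epsilon/2}$ loss. Eliminating this factor would require exploiting the additive structure of $\mathcal{Z}$ as midpoints of lattice-point pairs on $\lambda S^1$ in a way that goes beyond the pointwise stability provided by Lemma~\ref{stability lemma}.
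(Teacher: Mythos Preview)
Your argument for part~(i) is essentially the paper's: the same triangle-inequality estimate $|\mu_+(w)-z|\ll\sqrt\lambda$ feeds into Lemma~\ref{stability lemma}, and the paper in fact skips your dyadic-in-$|z-w|$ step by bounding $|z-w|_+^{-1/2}\le 1$ directly.

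Part~(ii), however, has a genuine gap. Your Lipschitz bound $|\mu_+(z)-\mu_+(w)|\ll\sqrt\lambda\,|z-w|$ is correct, but the resulting estimate $N_j(z)\ll 2^jB_\lambda$ leads, as you yourself compute, to $\sum_j 2^{j/2}B_\lambda\ll B_\lambda\lambda^{\epsilon/2}$. You cannot absorb this factor by sending $\epsilon\to 0$: the parameter $\epsilon$ was fixed earlier when the full sum was truncated to $|z-w|<\lambda^\epsilon$ at cost $O(\lambda^{-\epsilon/2}\#\mathcal Z\,\|b\|^2)$, and letting $\epsilon\to 0$ would make that truncation error blow up. What you actually prove is $\ll B_\lambda\lambda^{o(1)}\|b\|^2$, which is strictly weaker than the stated bound; pushed through to Theorem~\ref{thm lower bd for N} it yields only $N_{F,\curve}\gg \lambda^{1-o(1)}/B_\lambda^{5/2}$, no better than \cite{BRGAFA} even if $B_\lambda=O(1)$.

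The paper removes this loss by decomposing dyadically in $\Delta$ rather than in $|z-w|$: set $S_K=\{z:K\sqrt\lambda\le\Delta(z)<2K\sqrt\lambda\}$ and consider the blocks $A_{K,L}$ of the bilinear form. The key new ingredient, which your approach misses, is a \emph{lower} bound on $|z-w|$ across blocks (Lemma~\ref{bound z-w}): for $z\in S_K$, $w\in S_L$ with $2K<L$ one has $|z-w|\gg L^2$, coming from $|z|-|w|\gtrsim(\Delta(w)^2-\Delta(z)^2)/\lambda$. This separation, combined with the count $\#\{w\in S_L:|w-z|<\lambda^\epsilon\}\ll LB_\lambda$ from the same Lipschitz analysis, gives via Schur's test $\|A_{K,L}\|_{2\to 2}\ll B_\lambda(K/L)^{1/2}$ for $K\le L$. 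The off-diagonal decay in $K/L$ makes $\sum_{K,L}(K/L)^{1/2}\|b^{(K)}\|\|b^{(L)}\|\ll\|b\|^2$ converge with no $\lambda^\epsilon$ loss. In short, the missing idea is that medians with very different $\Delta$ must lie far apart, which breaks the critical balance you identify in your last paragraph.
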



\subsection{Proof of Proposition~\ref{prop bilinear schur} \eqref{prop
5.1bis}}
 By  Schur's test,
\begin{equation}
\sum_{\substack{0<\Delta(z),\Delta(w)\leq \sqrt{\lambda}\\
|z-w|<\lambda^\epsilon}} \frac{|b_z b_w|}{|z-w|_+^{1/2}}\leq
\max_{0<\Delta(z)\leq \sqrt{\lambda}} \sum_{\substack{0< \Delta(w)\leq \sqrt{\lambda}\\
|z-w|<\lambda^\epsilon} }\frac{1}{|z-w|_+^{1/2}} ||b||^2
\end{equation}
and so it suffices to show  that
\begin{equation}
\sum_{\substack{0< \Delta(w)\leq \sqrt{\lambda}\\
|z-w|<\lambda^\epsilon} }\frac{1}{|z-w|_+^{1/2}} \ll B_\lambda
\end{equation}
Replacing $|z-w|_+$ by $1$ we are reduced to showing that
\begin{equation}\label{counting S0}
\#\{0< \Delta(w)\leq \sqrt{\lambda},\quad  |z-w|<\lambda^\epsilon \}
\ll B_\lambda
\end{equation}

  We have
\begin{equation}
\mu_+(w) - \mu_+(z) =(w-z) +
(\Delta(w)-\Delta(z))\frac{z^\perp}{|z|} + \Delta(w)(
\frac{w^\perp}{|w|}-\frac{z^\perp}{|z|})
\end{equation}
Since $\Delta(z),\Delta(w)<\sqrt{\lambda}$ we have $|z|,|w|\sim
\lambda$, and hence
\begin{equation}
|\frac{w^\perp}{|w|}-\frac{z^\perp}{|z|}|\ll \frac{|z-w|}\lambda\ll
\lambda^{-1+\epsilon}
\end{equation}
Thus
\begin{equation}
|\mu_+(w) - \mu_+(z)| \ll \sqrt{\lambda}
\end{equation}
By Lemma~\ref{stability lemma}
  we see that there are at most $O(B_\lambda)$
possibilities for $w$. This proves \eqref{counting S0}. \qed

\subsection{A dyadic subdivision}
We turn to the proof of part \eqref{bilinear schur} of
Proposition~\ref{prop bilinear schur}. For $K\geq 1$, let
\begin{equation}
S_K = \{z\in  \mathcal Z, \quad K\sqrt{\lambda}\leq \Delta(z)
<2K\sqrt{\lambda} \}
\end{equation}
We write
\begin{equation}
\sumstar_{z,w}\frac {|b_z b_w |}{ |z-w|_+^{1/2}}   = \sum_{K,L
\;\rm{ dyadic}}\langle A_{K,L}b^{(K)}, b^{(L)} \rangle\;,
\end{equation}
 the sum over $K=2^k$, $L=2^\ell$, with
\begin{equation}
 \langle A_{K,L}b^{(K)}, b^{(L)} \rangle= \sumstar_{\substack{w\in S_L\\ z\in
S_K}} \frac {|b_z b_w |}{|z-w|_+^{1/2}}\;,
\end{equation}
where $b^{(K)} = (|b_z|)_{z\in S_K}$,  $b^{(L)} = (|b_w|)_{w\in
S_L}$, and $A_{K,L}:\C^{S_K}\to \C^{S_L}$ is the matrix
\begin{equation}
A_{K,L} = ( \frac 1{|z-w|_+^{1/2}})_{z\in S_K, w\in S_L}
\end{equation}
with zeros whenever one of the conditions
$\Delta(z),\Delta(w)>\sqrt{\lambda}$, $|z|,|w|>\lambda/2$  or
$|z-w|<\lambda^\epsilon$ is violated.

 We use Schur's test for the operator norm:
 \begin{equation}
 ||A_{K,L}||_{2\to 2}\leq
||A_{K,L}||_{1\to 1}^{1/2} \cdot ||A_{K,L}^*||_{1\to 1}^{1/2}
\end{equation}
 to bound
\begin{equation}
  |\langle A_{K,L}b^{(K)}, b^{(L)} \rangle |\leq ||A_{K,L}||_{1\to 1}^{1/2} \cdot ||A_{K,L}^*||_{1\to 1}^{1/2}
\cdot ||b^{(K)}|| \cdot ||b^{(L)}||
\end{equation}
where $||b^{(K)}||$ is the $\ell^2$-norm. We will show
\begin{proposition}\label{prop bounds on AKL}
For $K\leq L$,
\begin{equation}
||A_{K,L}||_{1\to 1} \ll  B_\lambda
\end{equation}
and
\begin{equation}
 ||A_{K,L}^*||_{1\to 1} \ll \frac KL B_\lambda
\end{equation}
\end{proposition}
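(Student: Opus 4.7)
The $\ell^1\to\ell^1$ norms are the maximum column sum and maximum row sum of the matrix. Since the entries $1/|z-w|_+^{1/2}$ are bounded by $1$, and a dyadic splitting in $|z-w|\sim 2^j$ (with $0\le j\le \epsilon\log\lambda$) loses only an absolute constant provided each ring count satisfies the same bound, the proposition reduces to the two counting estimates
\begin{equation*}
\max_{z\in S_K}\#\{w\in S_L:|z-w|<\lambda^\epsilon\}\ll B_\lambda, \qquad \max_{w\in S_L}\#\{z\in S_K:|z-w|<\lambda^\epsilon\}\ll \tfrac{K}{L}B_\lambda.
\end{equation*}

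Both will be derived from Lemma~\ref{stability lemma}, whose hypothesis demands that $\mu_+(\cdot)$ fall in a $\sqrt{\lambda}$-arc around some fixed $v$. The central geometric calculation is: for $z\in S_K$, $w\in S_L$ with $K\le L$ and $|z-w|<\lambda^\epsilon$, one expands
\begin{equation*}
\mu_+(w)-\mu_+(z) = (w-z) + (\Delta(w)-\Delta(z))\frac{w^\perp}{|w|} + \Delta(z)\Bigl(\frac{w^\perp}{|w|}-\frac{z^\perp}{|z|}\Bigr),
\end{equation*}
and uses $|\Delta(w)-\Delta(z)|=||z|^2-|w|^2|/(\Delta(z)+\Delta(w))\ll \lambda^{1/2+\epsilon}/L$ (since $K\le L$ gives $\Delta(z)+\Delta(w)\gtrsim L\sqrt{\lambda}$), together with $|w^\perp/|w|-z^\perp/|z||\ll |w-z|/\lambda$, to conclude $|\mu_+(w)-\mu_+(z)|\ll\sqrt{\lambda}$. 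Applying Lemma~\ref{stability lemma} with $v=\mu_+(z)$ and noting $|w-z|<\lambda^\epsilon\ll\lambda^{1/3}$ then gives the first bound.

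For the improved bound, fix $w$ and parametrize $z=w+re^{i\phi}$ with $r<\lambda^\epsilon$. The identity $\Delta(z)^2-\Delta(w)^2=-2r|w|\cos(\phi-\theta_w)-r^2$, combined with the level condition $\Delta(z)\in[K\sqrt{\lambda},2K\sqrt{\lambda}]$, pins the radial component $r\cos(\phi-\theta_w)$ to an interval of length $\sim KL$. The admissible set of $z$ is therefore a thin slab occupying a fraction $\sim K/L$ of the disc $B(w,\lambda^\epsilon)$. Rerunning the median-counting argument on this slab rather than on the full disc supplies the extra factor $K/L$ needed for the second bound.

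The main obstacle I anticipate is the small-$L$ regime, where $\lambda^{1/2+\epsilon}/L$ can exceed $\sqrt{\lambda}$ and so Lemma~\ref{stability lemma} no longer applies directly with a single choice of $v$. There I would cover the arc of possible $\mu_+(w)$ by $O(\lambda^\epsilon/L)$ arcs of length $\sqrt{\lambda}$ and apply Lemma~\ref{stability lemma} separately on each; since $\epsilon$ is free to be chosen sufficiently small, only $O((\log\lambda)^2)$ dyadic pairs $(K,L)$ ever contribute, and $B_\lambda\ll \log\lambda$ already absorbs polylogarithmic losses, this extra factor can be absorbed once the bilinear estimate is reassembled via Schur's test.
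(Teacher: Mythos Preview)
Your reduction to the two counting estimates is where the argument breaks. Bounding each entry $|z-w|_+^{-1/2}$ by $1$ (or dyadically in rings) and then asserting $\#\{w\in S_L:|z-w|<\lambda^\epsilon\}\ll B_\lambda$ does not work: your own geometric computation gives only
\[
|\Delta(w)-\Delta(z)|\ \ll\ \frac{\lambda^{1/2+\epsilon}}{L},
\]
which is $\ll\sqrt\lambda$ \emph{only} when $L\gtrsim\lambda^\epsilon$. For small $L$ (say $L=O(1)$, which certainly occurs) this interval has length $\sim\lambda^{1/2+\epsilon}$, and covering it by $\sqrt\lambda$-arcs as you suggest costs a factor $\lambda^\epsilon/L$, not a polylogarithm. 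So the loss cannot be ``absorbed'' into $B_\lambda\ll\log\lambda$ as you claim; you would end up with $\ll\lambda^\epsilon B_\lambda$ in place of $\ll B_\lambda$, which is strictly weaker than the proposition. The same issue afflicts the dyadic-ring version: for $L<2^j<L^2$ the ring count is $\sim(2^j/L)B_\lambda$, not $\ll B_\lambda$, so the sum $\sum_j 2^{-j/2}N_j$ does not collapse to $O(B_\lambda)$ from your bound alone.

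The paper avoids this by \emph{not} throwing away the denominator. It proves only the weaker count $\#\{w\in S_L:|w-z|<\lambda^\epsilon\}\ll L\,B_\lambda$ (via the subdivision $S_L=\bigcup_{l=0}^{L-1}S_{L,l}$ with $\Delta(w)$ in $\sqrt\lambda$-windows), but it compensates with a \emph{lower} bound on $|z-w|$: for $z\in S_K$, $w\in S_L$ with $2K<L$ and $|z-w|<\lambda^\epsilon$ one has $|z-w|\geq|z|-|w|\geq(\Delta(w)^2-\Delta(z)^2)/2\lambda\gg L^2$, so each surviving term is $\ll 1/L$ and the column sum is $\ll (L B_\lambda)/L=B_\lambda$. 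For the near-diagonal cases $K=L/2,L$ the paper uses the finer lower bounds $|z-w|\gg L|l-k|$ coming from the same calculation, summed over $l$. This interplay between the count and the size of $|z-w|$ is the missing ingredient in your plan. Your second estimate is also problematic as written: a bound of the form $\#\{\cdot\}\ll(K/L)B_\lambda$ with $K/L\leq 1$ for an integer-valued quantity cannot come from a ``volume fraction'' heuristic on a sparse discrete set; in the paper the factor $K/L$ arises again from pairing the count $\ll K B_\lambda$ with the denominator $|z-w|_+^{1/2}\gg L$.
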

Therefore
 \begin{equation}
 \begin{split}
\sumstar_{z,w}\frac {|b_z b_w|}{|z-w|_+^{1/2}}  &\ll
 B_\lambda \sum \sum_{K,L\;
\rm{dyadic}} (\frac{\min(K,L)}{\max(K,L)})^{1/2} ||b ^{(K)}|| \cdot ||b^{(L)}||\\
&\ll B_\lambda  \left\{\max_K \sum_{\substack {L=2^\ell \; \rm{dyadic}\\
L\geq K}}(\frac KL)^{1/2} \right\} \cdot \sum_K ||b^{(K)}||^2
\end{split}
\end{equation}
Since $ \sum_K ||b^{(K)}||^2 = ||b||^2 $ and
\begin{equation}
\sum_{\substack {L=2^\ell \; \rm{dyadic}\\
L\geq K}}(\frac KL)^{1/2} \ll 1
\end{equation}
 we will have proved part \eqref{bilinear schur} in Proposition~\ref{prop bilinear schur}. \qed

\subsection{Proof of Proposition~\ref{prop bounds on AKL}}
By Schur's test,
\begin{equation}
||A_{K,L}||_{1\to 1}\leq\max_{z\in S_K} \sumstar_{\substack{w\in
S_L\\|z-w|<\lambda^\epsilon}} \frac
1{|z-w|_+^{1/2}}  
\end{equation}
and
\begin{equation}
 ||A_{K,L}^*||_{1\to 1}  \leq \max_{w\in S_L} \sumstar_{\substack{z\in S_K\\|z-w|<\lambda^\epsilon}}
 \frac 1{|z-w|_+^{1/2}}
\end{equation}



\begin{lemma}\label{lem number of w}
Let $z\in S_K$. Then
\begin{equation}\label{number of w}
\#\{w\in S_L: |w-z|<\lambda^\epsilon\} \ll L   B_\lambda
\end{equation}
\end{lemma}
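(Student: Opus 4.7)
The plan is to parametrize each $w\in S_L$ with $|w-z|<\lambda^\epsilon$ by its median pair $(\mu_+(w),\mu_-(w))\in\vE\times\vE$ via $w=\tfrac12(\mu_+(w)+\mu_-(w))$. We bound the number of candidates for $\mu_+(w)$ directly, and then bound the number of candidates for $\mu_-(w)$ given $\mu_+(w)$, in parallel with the proof of Lemma~\ref{stability lemma}; the new ingredient is precise control of the arc on $\lambda S^1$ in which $\mu_+(w)$ must lie.

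To that end, write $w=r_w(\cos\theta_w,\sin\theta_w)$ in polar coordinates (valid since $|w|\sim\lambda$). By \eqref{eq for mu}, $\mu_+(w)$ is the point of $\lambda S^1$ with angular coordinate $\theta_w+\psi_w$, where $\sin\psi_w=\Delta(w)/\lambda$. The condition $w\in S_L$, i.e.\ $L\sqrt{\lambda}\leq\Delta(w)<2L\sqrt{\lambda}$, confines $\psi_w$ to an interval of length $O(L/\sqrt{\lambda})$. The condition $|w-z|<\lambda^\epsilon$ (together with $|z|>\lambda/2$) confines $\theta_w$ to an interval of length $O(\lambda^{\epsilon-1})$ about $\theta_z$. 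Hence $\theta_w+\psi_w$ varies in an interval of length $O(L/\sqrt{\lambda}+\lambda^{\epsilon-1})=O(L/\sqrt{\lambda})$, so $\mu_+(w)$ lies in an arc of $\lambda S^1$ of arclength $O(L\sqrt{\lambda})$. Subdividing this arc into $\asymp L$ sub-arcs of length $\sqrt{\lambda}$, by the definition of $B_\lambda$ each sub-arc contains at most $B_\lambda$ points of $\vE$, giving $O(LB_\lambda)$ candidates for $\mu_+(w)$.

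Given $\mu_+(w)$, the identity $\mu_-(w)=2w-\mu_+(w)=(2z-\mu_+(w))+2(w-z)$ together with $|w-z|<\lambda^\epsilon<\lambda^{1/3}$ (taking $\epsilon<1/3$) locates $\mu_-(w)$ within a disk of radius $O(\lambda^{1/3})$ around the known point $2z-\mu_+(w)$, and in particular within an arc of $\lambda S^1$ of diameter $O(\lambda^{1/3})$. By Jarnik's theorem there are at most two possibilities. Since the pair $(\mu_+(w),\mu_-(w))$ determines $w$, we conclude $\#\{w\}\ll LB_\lambda$, as claimed. The main work lies in the first step; the factor $L$ arises directly from the dyadic scale $\Delta(w)\sim L\sqrt{\lambda}$ translating into an arclength $\sim L\sqrt{\lambda}$ on the circle, which one then partitions into $\sim L$ arcs of the critical size $\sqrt{\lambda}$ on which $B_\lambda$ is defined.
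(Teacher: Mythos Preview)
Your argument is correct and is essentially the paper's proof, reorganized. The paper partitions $S_L$ into $L$ slabs $S_{L,l}=\{(L+l)\sqrt{\lambda}\le\Delta(w)<(L+l+1)\sqrt{\lambda}\}$, shows via the Cartesian formula \eqref{eq for mu} that $\mu_+(w)$ for $w\in S_{L,l}$ lies within $O(\sqrt{\lambda})$ of a fixed point $v=v(z,l)$, and then invokes Lemma~\ref{stability lemma} once per slab; you instead use polar coordinates to see directly that $\mu_+(w)$ ranges over an arc of length $O(L\sqrt{\lambda})$, partition that arc, and unfold the content of Lemma~\ref{stability lemma} (the $B_\lambda$ count plus Jarnik) by hand. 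The geometry is identical; only the bookkeeping differs.
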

\begin{proof}
For  $0\leq l\leq L-1$, set
\begin{equation}\label{SLl}
S_{L,l} = \{w\in S_L:\quad (L+l)\sqrt{\lambda}\leq \Delta(w)
<(L+l+1)\sqrt{\lambda} \}
\end{equation}
We show that for $z\in S_K$, $w\in S_{L,l}$ and
$|z-w|<\lambda^\epsilon$, we have
\begin{equation}\label{dist muw}
 |\mu_+(w)-v| \ll \sqrt{\lambda}
\end{equation}
where
\begin{equation}
v = \mu_+(z) + \left((L+l) \sqrt{\lambda} -\Delta(z)\right)
\frac{z^\perp}{|z|}
\end{equation}
 By Lemma~\ref{stability lemma} we see that there are at most $O(B_\lambda)$ possibilities for $w$ in
$S_{L,l}$ subject to $|w-z|<\lambda^\epsilon$:
\begin{equation}\label{bound ****}
\#\{w\in S_{L,l}: |w-z|<\lambda^\epsilon\} \ll B_\lambda
\end{equation}
Since
\begin{equation}
S_L = \bigcup_{l=0}^{L-1} S_{L,l}
\end{equation}
we find
\begin{equation}
\#\{w\in S_L:|w-z|<\lambda^\epsilon\} \ll L B_\lambda
\end{equation}
as claimed.

To prove \eqref{dist muw}, we use  \eqref{eq for mu} to get
\begin{equation}
\mu_+(w)-\mu_+(z) = w-z + \Delta(w) \frac{w^\perp}{|w|} -\Delta(z)
\frac{z^\perp}{|z|}
\end{equation}
Note that $|w^\perp| = |w|$ and since $|z|\geq \lambda/2$ and
$|z-w|<\lambda^\epsilon$ then $|w|=|z|+O(\lambda^\epsilon)\gg
\lambda$ and so
\begin{equation}
\frac{w^\perp}{|w|} =\frac{z^\perp}{|z^\perp|} +
O(\lambda^{-1+\epsilon})
\end{equation}
Hence
\begin{equation}
\begin{split}
\mu_+(w)-\mu_+(z)   &=O(\lambda^\epsilon)
+\Delta(w)(\frac{z^\perp}{|z^\perp|} + O(\lambda^{-1+\epsilon})) -
\Delta(z)\frac{z^\perp}{|z^\perp|}\\
&=(\Delta(w)-\Delta(z)) \frac{z^\perp}{|z^\perp|} + O(
\lambda^\epsilon)
\end{split}
\end{equation}
Writing 
$\Delta(w) = (L+l+\theta)\sqrt{\lambda}$ with $0\leq \theta<1$ we
find
\begin{equation}
\mu_+(w)-\mu_+(z)  = \left((L+ l)\sqrt{\lambda}-\Delta(z)\right)
\frac{z^\perp}{|z^\perp|} + \theta\sqrt{\lambda}
\frac{z^\perp}{|z^\perp|} + O(\lambda^\epsilon)
\end{equation}
proving \eqref{dist muw}.
\end{proof}

We now give a lower bound for the difference of medians  $|z-w|$:

\begin{lemma}\label{bound z-w}
Let $K\leq L$, $w\in S_{L,l}$, $z\in S_{K,k}$,
$0<|w-z|<\lambda^\epsilon$. Then
\begin{enumerate}
\item If $2K<L$ then $|z-w|\gg L^2$;

\item If $K=L/2$ and $l\neq 0$ then $|z-w|\gg Ll$ ;

\item If $K=L$ and $l\neq k, k\pm 1$ then $|z-w|\gg L|l-k|$.

\end{enumerate}
\end{lemma}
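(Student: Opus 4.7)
\medskip

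\noindent\textbf{Proof plan for Lemma~\ref{bound z-w}.}
The plan is to reduce $|z-w|$ to the difference of radii $\bigl||z|-|w|\bigr|$ via the triangle inequality, then exploit the identity $|z|^2+\Delta(z)^2=\lambda^2$ to rewrite that in terms of $\Delta(z)^2-\Delta(w)^2$, and finally substitute the dyadic/sub-dyadic information $\Delta(z)=(K+k+\theta_z)\sqrt\lambda$, $\Delta(w)=(L+l+\theta_w)\sqrt\lambda$ with $\theta_z,\theta_w\in[0,1)$, reducing all three cases to elementary algebra on the squares of these small integers.

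The starting step uses that every $z,w$ in the starred sum satisfies $|z|,|w|\geq\lambda/2$, so $|z|+|w|\leq 2\lambda$, giving
\[
|z-w|\;\geq\;\bigl||z|-|w|\bigr|\;=\;\frac{\bigl||z|^2-|w|^2\bigr|}{|z|+|w|}\;\geq\;\frac{|\Delta(z)^2-\Delta(w)^2|}{2\lambda}\;=\;\tfrac12\bigl|(K+k+\theta_z)^2-(L+l+\theta_w)^2\bigr|.
\]
Thus the whole problem collapses to bounding this last expression from below in each of the three regimes.

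In case (i), the dyadic hypothesis $2K<L$ with $K=2^k$, $L=2^\ell$ forces $L\geq 4K$. Hence $K+k+\theta_z<2K\leq L/2$, so $(K+k+\theta_z)^2<L^2/4$, while $(L+l+\theta_w)^2\geq L^2$, and the difference is $\geq 3L^2/4\gg L^2$. In case (ii) with $K=L/2$, one factors
\[
(L+l+\theta_w)^2-(L/2+k+\theta_z)^2=\bigl(L/2+l-k+(\theta_w-\theta_z)\bigr)\bigl(3L/2+l+k+(\theta_w+\theta_z)\bigr);
\]
using $k\leq L/2-1$ and $l\geq 1$, the first factor is $\geq L/2+l-(L/2-1)-1=l$, while the second factor is $\geq 3L/2$, giving a product $\gg Ll$. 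Case (iii) with $K=L$ factors analogously as $(l-k+\theta_w-\theta_z)(2L+l+k+\theta_w+\theta_z)$; the hypothesis $l\neq k,k\pm 1$ (WLOG $l-k\geq 2$) makes the first factor at least $(l-k)-1\geq |l-k|/2$, while the second is $\geq 2L$, producing $\gg L|l-k|$.

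The only delicate point is the lower bound on the first factor in case (ii): the range of $k$ extends all the way up to $L/2-1$, so one must check that the loss from $-k$ cannot eat up the gain from $l$; this is exactly where the inequality $k\leq L/2-1$ is tight and must be used sharply. Everything else is routine, and the bound $|z-w|\geq\bigl||z|-|w|\bigr|$ is all we need from the geometry since only a lower bound on $|z-w|$ is sought.
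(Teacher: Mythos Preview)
Your proof is correct and follows essentially the same approach as the paper: both reduce $|z-w|$ to $\bigl||z|-|w|\bigr|$, convert this into $\frac{|\Delta(w)^2-\Delta(z)^2|}{2\lambda}$ via $|z|^2+\Delta(z)^2=\lambda^2$, and then do the same case analysis on the dyadic/sub-dyadic parameters. The only cosmetic difference is that the paper writes $|z|-|w|=\frac{\Delta(w)^2}{\lambda+|w|}-\frac{\Delta(z)^2}{\lambda+|z|}$ and uses $|w|\leq |z|$, whereas you use the cleaner identity $\bigl||z|-|w|\bigr|=\frac{||z|^2-|w|^2|}{|z|+|w|}$ together with $|z|+|w|\leq 2\lambda$; the resulting bound and the subsequent factoring are identical.
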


\begin{proof}
 To bound $|z-w|$, the condition $K\leq L$ allows us to assume $|z|\geq |w|$.
Then
\begin{equation}
\begin{split}
|z-w| &\geq |z|-|w| = \frac{\lambda^2-|w|^2}{\lambda+|w|} -
\frac{\lambda^2-|z|^2}{\lambda+|z|}\\
& \geq \frac{\Delta(w)^2}{\lambda+|z|} -
\frac{\Delta(z)^2}{\lambda+|z|}
\\
 &\geq \frac{\Delta(w)^2-\Delta(z)^2}{2\lambda
}\\
&\gg \frac{L}{\sqrt{\lambda}} |\Delta(w)-\Delta(z)| \gg L(L+l-K-k-1)
\end{split}
\end{equation}

If $K\leq L/4$ then $L(L+l-K-k-1)\geq L(L-2K)\geq L^2/2$. If $K=L/2$
then $L|L+l-K-k-1|\geq L(L+l-2K)=Ll$, useful if $l\neq 0$. Finally
if $K=L$ then $L(L+l-K-k-1|=L|l-k-1|\geq \frac 12 L|l-k|$ if
$|l-k|\geq 2$, the exceptional cases being  $l=k,k\pm 1$.
%
%
%
\end{proof}

\subsection{Bounding $||A_{K,L}||_{2\to 2}$}

We want to show that, if $K\leq L$, then
\begin{equation}
||A_{K,L}||_{1\to 1}\leq \max_{z\in S_K} \sum_{\substack{w\in S_L\\|z-w|<\lambda^\epsilon}} \frac
1{|z-w|_+^{1/2}}  \ll  B_\lambda
\end{equation}
and
\begin{equation}
 ||A_{K,L}^*||_{1\to 1}  \leq \max_{w\in S_L} \sum_{\substack{z\in S_K\\|z-w|<\lambda^\epsilon}} \frac 1{|z-w|_+^{1/2}} \leq \frac KL B_\lambda
\end{equation}

We assume first that $2K<L$, that is $K=2^k$, $L=2^\ell$ with $\ell
\geq k+2$. Let
\begin{equation}
S_L(z,\lambda^\epsilon) = \{w\in S_L:|w-z|<\lambda^\epsilon\}
\end{equation}
 We have
\begin{equation}
\begin{split}
||A_{K,L}||_{1\to 1}&\leq  \max_{z\in S_K} \sum_{ w\in S_L(z,\lambda^\epsilon) } \frac
1{|z-w|_+^{1/2}} \\
&\leq \max_{z\in S_K}\frac {\#S_L(z,\lambda^\epsilon)}{\min_{w\in
S_L(z,\lambda^\epsilon)}|z-w|_+^{1/2}}
\end{split}
\end{equation}
According to Lemma~\ref{lem number of w},
\begin{equation}
\#S_L(z,\lambda^\epsilon)\ll L B_\lambda
\end{equation}
 and by   Lemma~\ref{bound z-w}, if   $2K<L$ then
\begin{equation}
\min_{w\in S_L(z,\lambda^\epsilon)}|z-w| \gg L^2
\end{equation}
 Hence we find (for $2K<L$)
\begin{equation}
||A_{K,L}||_{1\to 1} \ll B_\lambda
\end{equation}

Arguing in the same way with the roles of $L$ and $K$ reversed
gives, for $2K<L$, that
\begin{equation}
 ||A_{K,L}^*||_{1\to 1}    \leq \max_{z\in S_K}\frac {\#S_K(z,\lambda^\epsilon)}{\min_{w\in
S_K(z,\lambda^\epsilon)}|z-w|_+^{1/2}} \leq \frac {K B_\lambda}L
\end{equation}

\subsection{The cases $K=L/2,L$}
It remains to deal with the case $K=L/2$ and $K=L$. We use the
decomposition $S_L = \bigcup_{l=0}^{L-1} S_{L,l}$    in \eqref{SLl}
to write
\begin{equation}
\begin{split}
 \sum_{ w\in S_{L }(z,\lambda^\epsilon)} \frac 1{ |z-w|_+^{1/2}} &\ll
\sum_{l=0}^{L-1}\sum_{w\in S_{L,l}(z,\lambda^\epsilon)} \frac 1{|z-w|_+^{1/2}}\\
&\ll \sum_{l=0}^{L-1}\frac {\#S_{L,l}(z,\lambda^\epsilon)} {
\min_{w\in S_{L,l}(z,\lambda^\epsilon)}|z-w|_+^{1/2}}\\
&\ll B_\lambda \sum_{l=0}^{L-1}\left(\min_{w\in
S_{L,l}(z,\lambda^\epsilon)}|z-w|_+^{1/2}  \right)^{-1}
\end{split}
\end{equation}
where we have used \eqref{bound ****}. Applying Lemma~\ref{bound z-w} gives for $K=L/2$
\begin{equation}
\sum_{  w\in S_{L }(z,\lambda^\epsilon) } \frac 1{
|z-w|_+^{1/2}}\ll B_\lambda\left( 1+\sum_{l=1}^{L-1} \frac
1{(Ll)^{1/2}}   \right) \ll B_\lambda
\end{equation}
and if $K=L$ and $z\in S_{L,k}$ we get
\begin{equation}
\sum_{  w\in S_{L }(z,\lambda^\epsilon) } \frac 1{|z-w|_+^{1/2}}\ll
B_\lambda \left(\sum_{\substack{0\leq l\leq L-1\\|l-k|\geq 2}} \frac
1{L^{1/2}|l-k|^{1/2}}  + O(1) \right) \ll B_\lambda
\end{equation}

Thus we find that  $||A_{K,L}^*||_{1\to 1} , ||A_{K,L}^*||_{1\to 1}
\ll B_\lambda$ for  $K=L/2,L$, concluding the proof of
Proposition~\ref{prop bounds on AKL}. \qed

\section{Exceptions on the sphere and the torus}\label{Sec geod}
\subsection{Nodal intersections with geodesics on the torus}

We conclude by pointing out that no lower bound on $N_{F,\curve}$ is
possible without the assumption on non-vanishing of the curvature of
$\curve$, that is when $\curve$ is flat.

When $\curve$ is a segment of a closed geodesic on the torus, there
are arbitrarily large eigenvalues $\lambda$ for which there are
eigenfunctions $F_\lambda$ vanishing identically on $\curve$, that
is for which $\curve \subset \mathcal N_{F_\lambda}$. Indeed, if the
curve is a segment of the rational line $px+qy=c$, with $(0,0)\neq
(p,q)\in \Z^2$ then taking $F_n(x,y) = \sin n(qx-py-c)$,
$n=1,2,\dots$, gives an eigenfunction which has eigenvalue
$n^2(p^2+q^2)$ and which vanishes on the entire closed geodesic. See
\cite{BRINV} for further discussion of such "persistent components".

For the case when $\curve$ is a segment of an unbounded geodesic, we
claim that there are always arbitrarily large eigenvalues
$\lambda_k^2$ for which there is an eigenfunction $F_k$ with
$N_{F_k,\curve}=0$. To see this, take an irrational $\beta\notin
\Q$, and $\vec v_0\in \R^2$  and let $\curve$ be the irrational line
segment $\{\vec v_0+ t(1,-\beta): |t|<1\}$. Let $\vec n_k=(p_k,q_k)$
be a sequence of good rational approximations of $\beta$:
\begin{equation}
|\beta-\frac {p_k}{q_k}|<\frac 1{q_k^2}
\end{equation}
with $q_k\to +\infty$. Let
\begin{equation}
F_k(\vec x) = \cos(\vec n_k \cdot (\vec x-\vec v_0) )
\end{equation}
which is an eigenfunction with eigenvalue $\lambda_k^2
=p_k^2+q_k^2$. Then on $\curve$ we have
\begin{equation}
F_k(\vec v_0+ t(1,-\beta)) = \cos(t(p_k-q_k\beta))
\end{equation}
Since
\begin{equation}
|t(p_k-q_k\beta)|\leq |p_k-q_k\beta|<\frac 1{q_k}
\end{equation}
we see that
\begin{equation}
F_k(\vec v_0+ t(1,-\beta))  = 1+O(\frac 1{q_k^2})
\end{equation}
and so for $k\gg 1$, $F_k|_\curve$ has no zeros.

\subsection{The sphere}

On the sphere, a basis of eigenfunctions is provided by the
spherical harmonics. Lets restrict attention to {\em zonal}
spherical harmonics. They are of the form $Y_\ell^0 = P_\ell (\cos
\theta)$ where $\theta$ is the colattitude, and $P_\ell(x)$ are the
Legendre polynomials
\begin{equation}
 P_\ell(x) = \frac 1{2^\ell} \sum_{j=0}^{\lfloor \ell/2 \rfloor} (-1)^j \binom{\ell}{j}\binom{2\ell-2j}{\ell-2j} x^{\ell-2j}
\end{equation}
which are orthogonal polynomials on  the interval $[-1,1]$. The
nodal set of the zonal spherical harmonic $Y_\ell^0$ is the union of
the parallels $\theta=\theta_{\ell,j}$, $j=1,\dots,\ell$ where
$x_{\ell,j} = \cos \theta_{\ell,j}$ are the zeros of the Legendre
polynomial $P_\ell(x)$.

Since $P_\ell(-x) = (-1)^\ell P_\ell(x)$, for odd $\ell$ we have
$P_\ell(0)=0$, and so we find that the zonal spherical harmonics
vanish on the equator $\curve(\pi/2)=\{\theta=\pi/2\}$ for odd
$\ell$, that is $N_{\curve(\pi/2),Y_\ell^0}=\infty$.

For other parallels $\curve(\theta_0)=\{\theta=\theta_0\}$,
$0<\theta_0<\pi/2$, we claim that there are infinitely many $\ell$
with $N_{\curve(\theta_0),Y_\ell^0}=0$. Thus even though the
parallels have nonzero curvature, no analogue for the lower bound of
of Theorem~\ref{thm lower bd for N} can hold on the sphere. To see
this, note that if $\cos \theta_0$ is not one of the countably many
zeros of the $P_\ell(x)$, then all the $Y_\ell^0$ never vanish
there. If $P_L(\cos\theta_0)=0$, then we claim that
$P_p(\cos\theta_0)\neq 0$ for all {\em prime} $p> L+1$. Indeed, when
$p>2$ is prime, Holt \cite{Holt} showed in 1912 that $P_p(x)/x$ are
irreducible over the rationals, and since $\deg P_\ell=\ell$, we
must have $\gcd(P_p(x)/x,P_L(x))=1$ and in particular they have no
common zeros.

\end{document}